\newtheorem{theorem}{Theorem}
\newtheorem{corollary}{Corollary}
\newtheorem{lemma}{Lemma}
\newtheorem{problem}{Problem}[section] 
\newtheorem{example}{Example}
\newtheorem{remark}{Remark}
\newcommand{\ostar}{\mathbin{\mathpalette\make@circled\star}}
\newcommand{\removelatexerror}{\let\@latex@error\@gobble}
\newcommand*{\rom}[1]{\expandafter\@slowromancap\romannumeral #1@}
\newcommand\latinabbrev[1]{
  \peek_meaning:NTF . {
    #1\@}%
  { \peek_catcode:NTF a {
      #1.\@ }%
    {#1.\@}}}
\titleclass{\subsubsubsection}{straight}[\subsubsection]
\begin{document}
\vspace{1cm}
\title{Existence and Enumeration of Polynomially Transformed Matrices under Spectral and Nilpotent Constraints}
\vspace{1.8cm}
\author{Shih-Yu~Chang
\thanks{Shih-Yu Chang is with the Department of Applied Data Science,
San Jose State University, San Jose, CA, U. S. A. (e-mail: {\tt
shihyu.chang@sjsu.edu}). 
           }}

\maketitle

\begin{abstract}
Matrix functions provide a unified framework that extends scalar functional concepts to linear operators, enabling broad applications across mathematics, science, and engineering. Classical formulations—through power series, spectral calculus, or Jordan canonical form—capture both diagonalizable and defective cases, allowing deeper insight into system dynamics, stability, and modal interactions. Building upon this foundation, this study investigates the transformation behavior of matrix functions under nilpotency and diagonalizability constraints, motivated by the Jordan decomposition of a matrix by the spectral part and the nilpotent part, where the spectral part  encodes eigenstructure and the nilpotent part characterizes deviation from diagonalizability. Two central problems are addressed: (1) identifying the conditions under which a polynomial transformed matrix becomes nilpotent, and (2) determining when a polynomial transformed matrix becomes diagonalizable. The approach proceeds through two primary scenarios—matrices with a single distinct eigenvalue and those with multiple distinct eigenvalues—each presenting different algebraic and combinatorial challenges. Depending on whether the polynomial transform function is sparse or dense, appropriate analytical tools are adopted: combinatorial root-counting techniques such as Descartes’ rule and Sturm’s theorem for sparse forms, and complex analytic or algebraic methods, including the argument principle, Rouché’s theorem, and square-free factorization, for dense cases. Collectively, these methods form a systematic framework for examining both the existence and enumeration of matrices satisfying prescribed structural properties of a polynomial transformed matrix. The results contribute to a deeper understanding of spectral transformations and offer new perspectives for applications in operator theory, dynamical systems, and computational linear algebra.
\end{abstract}
\begin{keywords}
Matrix functions, Jordan decomposition, nilpotent matrices, diagonalizable matrices, polynomial transformations.
\end{keywords}

\section{Introduction}\label{sec: Introduction}

Matrix functions extend scalar functions to matrix arguments and thereby form a unifying language for linear transformations across mathematics, science, and engineering. Classical constructions—via power series, the spectral (functional) calculus for diagonalizable matrices, or Jordan canonical form for defective cases—allow definitions of $\exp(A)$, $\log(A)$, $A^{1/2}$ and other maps that encapsulate system evolution, similarity transformations, and modal behavior \cite{Higham2008}. From a computational viewpoint, efficient and stable evaluation of matrix functions has motivated methods such as scaling-and-squaring with Padé approximants, Krylov subspace approaches for the action of a function on a vector, and rational approximations tailored to structure and sparsity \cite{Moler2003,AlMohy2011}. In applied mathematics, the matrix exponential furnishes closed-form solutions to linear ordinary differential systems and underpins exponential integrators; in quantum mechanics it represents time evolution operators; in control theory, transfer functions and Lyapunov analyses invoke matrix functions to assess stability and performance \cite{Golub2013}. Modern data and network sciences further exploit matrix functions to characterize diffusion, communicability, and centrality on graphs, while recent extensions to higher-order arrays prompt analogous functional calculus for tensors and multilinear operators \cite{chang2024operatorChar,chang2024generalizedCDJ}. Problems become more challenging when matrices are large, structured or uncertain: Numerical algorithm should balance the accuracy, complexities and preservation of qualitative properties. Overall, matrix functions can provide both rigorous analytical tools and practical computational mechanisms that bride down the principle and application in diverse scientific and engineering domains.

Given any matrix $\bm{X}$ with complex-valued entries, we can perform Jordan decomposition to $\bm{X}$. Then, we have $\bm{X} = \bm{S} + \bm{N}$, where $\bm{S}$ is the spectral (diagonalizable) part and $\bm{N}$ is the  nilpotent part. The spectral part $\bm{S}$ extracts the eigenstructure of $\bm{X}$—it determines invariant subspaces, eigenvalue multiplicities, and the asymptotic or steady-state behavior of linear transformations. On the other hand, the nilpotent part $\bm{N}$ encodes the deviation of $\bm{X}$ from diagonalizability; it governs transient dynamics, higher-order interactions between eigenvectors, and the algebraic multiplicity of eigenvalues~\cite{Putzer1966,chang2024operatorChar}.  

When a matrix function $f(\bm{X})$ is considered, both components contribute in fundamentally different ways. The spectral part transforms under $f$ according to the standard functional calculus on eigenvalues, whereas the nilpotent part interacts with the derivatives of $f$ evaluated at those eigenvalues. This decomposition therefore provides a natural analytical framework to explore two distinct but related problems:  
\begin{enumerate}[label=(\arabic*)]
    \item Under what conditions the transformed matrix $f(\bm{X})$ becomes nilpotent.
    \item When $f(\bm{X})$ becomes diagonalizable.
\end{enumerate}
Both problems assume that the function $f$ is provided. Beyond their theoretical significance, these questions motivate two types of investigations—existence and enumeration. Specifically, we seek to determine whether such matrices $\bm{X}$ exist for a given functional transformation and, if so, how many distinct similarity classes of $\bm{X}$ satisfy the nilpotency or diagonalizability constraints of $f(\bm{X})$. These problems form the foundation for the subsequent sections of this study.

In order to consider the existence of the matrix $\bm{X}$ and the number of valid $\bm{X}$ in the sense of similarity to make $f(\bm{X})$ satisfy nilpotency and diagonalizability constraints, we study two distinct scenarios based on the eigenstructure of $\bm{X}$. The first scenario assumes that $\bm{X}$ has a single distinct eigenvalue. This case facilitates explicit derivation of necessary and sufficient conditions for $f(\bm{X})$ to be nilpotent or diagonalizable. The second scenario generalizes the analysis to matrices with multiple distinct eigenvalues, where interactions between different Jordan blocks must be accounted for, leading to more intricate combinatorial and algebraic considerations.

In addition to these scenarios, the approach depends critically on the type of coefficients in the function $f$. For \emph{sparse} $f$, combinatorial root-counting techniques are particularly effective. Methods such as Descartes' rule of signs, Sturm's theorem, and fewnomial bounds provide upper bounds on the number of possible roots and facilitate the enumeration of matrices $\bm{X}$ satisfying the desired constraints. Conversely, for \emph{dense} $f$, we rely on tools from complex analysis and algebraic computation. Specifically, the argument principle and Rouché's theorem allow precise counting of roots in the complex plane, while algebraic methods including greatest common divisor computation and square-free factorization assist in characterizing multiplicities and eliminating degeneracies. Together, these approaches provide a comprehensive framework for addressing both existence and counting problems for $f(\bm{X})$ under nilpotency and diagonalizability constraints.

The organization of this paper is as follows. Section~\ref{sec:Function Transform Review} begins with a review of the Jordan decomposition for a function of a matrix, following our previous formulation in~\cite{chang2025matrixSNO}. Based on this framework, we then establish the enumeration problems concerning the characterization of matrices satisfying nilpotency and diagonalizability constraints. In Section~\ref{sec:Nilpotency Constraints}, we focus on analyzing the nilpotency condition of $f(\bm{X})$  under two structural cases. The first one case involves a single distinct eigenvalue, which will be discussed in Section~\ref{sec:One Distinct Eigenvalue_N}. The other case is to consider multiple distinct eigenvalues, which is explored in Section~\ref{sec':Multiple Distinct Eigenvalues_N}. Next, Section~\ref{sec:Diagonalizable Constraints} examines the diagonalizability condition of $f(\bm{X})$ in a similar two-part structure. Section~\ref{sec':One Distinct Eigenvalue_D} addresses the case with one distinct eigenvalue, while Section~\ref{sec':Multiple Distinct Eigenvalues_D} extends the analysis to matrices with multiple distinct eigenvalues~\footnote{The authors used OpenAI’s ChatGPT (GPT-5) and DeepSeek LLM to assist in revising the manuscript for clarity, verifying intermediate mathematical calculations, and collecting relevant references. These tools were employed solely as research aids; all conceptual analysis, validation, and conclusions were conducted independently by the authors~\cite{openai2025chatgpt,deepseek2025}.}.

\section{Function Transform Review and Problem Formulation}\label{sec:Function Transform Review}

In this section, we begin with a short review of Jordan decomposition form for a function of matrix based on our previous work given by~\cite{chang2025matrixSNO}. Then,  the existence and enumeration problems of valid matrices with nilpotency and diagonalizable constraints are formulated. 

Let us consider a Jordan block $\bm{J}_{n}(\lambda)$ shown below
\begin{eqnarray}\label{eq: Jordan Block}
\bm{J}_{n}(\lambda)= \begin{bmatrix}
   \lambda & 1 & 0 & \cdots & 0 \\
   0 & \lambda & 1 & \cdots & 0 \\
   \vdots & \vdots & \ddots & \ddots & \vdots \\
   0 & 0 & \cdots & \lambda & 1 \\
   0 & 0 & \cdots & 0 & \lambda
   \end{bmatrix}_{n\times n},
\end{eqnarray}
then, given a function $f$ with the derivative existing to the $n-1$-th ordering, we have $f(\bm{J}_{n}(\lambda))$ expressed as
\begin{eqnarray}\label{eq: f Jordan Block}
f(\bm{J}_{n}(\lambda))= \begin{bmatrix}
   f(\lambda) & \frac{f'(\lambda)}{1!} & \frac{f''(\lambda)}{2!} & \cdots & \frac{f^{(n-1)}(\lambda)}{(n-1)!} \\
   0 & f(\lambda) & \frac{f'(\lambda)}{1!} & \cdots & \frac{f^{(n-2)}(\lambda)}{(n-2)!}  \\
   \vdots & \vdots & \ddots & \ddots & \vdots \\
   0 & 0 & \cdots & f(\lambda) & \frac{f'(\lambda)}{1!} \\
   0 & 0 & \cdots & 0 & f(\lambda)
   \end{bmatrix}_{n\times n},
\end{eqnarray}

Given a matrix $\bm{X}\in \mathbb{C}^{m \times m}$ with the following Jordan decompostion form:
\begin{eqnarray}\label{eq1: matrix setup GDO for Nilpotent}
\bm{X}&=& \bm{U}\left(\bigoplus\limits_{k=1}^{K}\bigoplus\limits_{i=1}^{\alpha_{k}^{(\mathrm{G})}}\bm{J}_{m_{k,i}}(\lambda_{k})\right)\bm{U}^{-1},
\end{eqnarray}
where we have $\alpha_k^{\mathrm{A}} = \sum\limits_{i=1}^{\alpha_k^{\mathrm{G}}} m_{k,i}$ for $k=1,2,\ldots,K$ and $\sum\limits_{k=1}^{K}\alpha_k^{\mathrm{A}}= m$. $\alpha_k^{\mathrm{A}}$ and $\alpha_k^{\mathrm{G}}$ are algebraic and geometric multiplicties with respect to the eigenvalue $\lambda_k$. Note that the matrix $\bm{U}$ is composed by eigenvectors and/or generalized eigenvectors with respect to $\lambda_k$.

Recall Theorem 1 in~\cite{chang2024operatorChar}, we have the following spectral mapping theorem. 
\begin{theorem}\label{thm: Spectral Mapping Theorem for Single Variable}
Given an analytic function $f(x)$ within the domain for $|z| < R$, a matrix $\bm{X}$ with the dimension $m$ and $K$ distinct eigenvalues $\lambda_k$ for $k=1,2,\ldots,K$ such that
\begin{eqnarray}\label{eq1: thm: Spectral Mapping Theorem for Single Variable}
\bm{X}&=&\sum\limits_{k=1}^K\sum\limits_{i=1}^{\alpha_k^{\mathrm{G}}} \lambda_k \bm{P}_{k,i}+
\sum\limits_{k=1}^K\sum\limits_{i=1}^{\alpha_k^{\mathrm{G}}} \bm{N}_{k,i},
\end{eqnarray}
where $\left\vert\lambda_k\right\vert<R$, and matrices $\bm{P}_{k,i}$ and $\bm{N}_{k,i}$ are projector and nilpotent matrices with respect to the $k$-th eigenvalue and its $i$-th geometry component. 

Then, we have
\begin{eqnarray}\label{eq2: thm: Spectral Mapping Theorem for Single Variable}
f(\bm{X})&=&\sum\limits_{k=1}^K \left[\sum\limits_{i=1}^{\alpha_k^{(\mathrm{G})}}f(\lambda_k)\bm{P}_{k,i}+\sum\limits_{i=1}^{\alpha_k^{(\mathrm{G})}}\sum\limits_{q=1}^{m_{k,i}-1}\frac{f^{(q)}(\lambda_k)}{q!}\bm{N}_{k,i}^q\right].
\end{eqnarray}
\end{theorem}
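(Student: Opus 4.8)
The plan is to reduce the statement to the single-Jordan-block formula already recorded in \eqref{eq: f Jordan Block} and then reassemble the blocks in the projector--nilpotent notation of \eqref{eq1: thm: Spectral Mapping Theorem for Single Variable}. Since $f$ is analytic on $|z|<R$ and every eigenvalue satisfies $|\lambda_k|<R$, the spectral radius of $\bm{X}$ lies strictly inside the disk of convergence, so $f(\bm{X})$ is well defined by the power series $f(\bm{X})=\sum_{j\ge 0}a_j\bm{X}^j$ where $f(z)=\sum_j a_j z^j$. Writing $\bm{X}=\bm{U}\bm{D}\bm{U}^{-1}$ with $\bm{D}=\bigoplus_{k,i}\bm{J}_{m_{k,i}}(\lambda_k)$, the identities $\bm{X}^j=\bm{U}\bm{D}^j\bm{U}^{-1}$ and $\bm{D}^j=\bigoplus_{k,i}\bm{J}_{m_{k,i}}(\lambda_k)^j$ give, term by term, $f(\bm{X})=\bm{U}\big(\bigoplus_{k,i}f(\bm{J}_{m_{k,i}}(\lambda_k))\big)\bm{U}^{-1}$. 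This is the similarity invariance and block-diagonal action of $f$, and it is where the hypothesis $|\lambda_k|<R$ is genuinely used.

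First I would apply \eqref{eq: f Jordan Block} to each diagonal block. Decomposing $\bm{J}_{n}(\lambda)=\lambda\bm{I}_n+\bm{S}_n$, where $\bm{S}_n$ is the super-diagonal shift with $\bm{S}_n^{n}=\bm{0}$, that formula reads $f(\bm{J}_n(\lambda))=\sum_{q=0}^{n-1}\frac{f^{(q)}(\lambda)}{q!}\bm{S}_n^{\,q}$, whose $q=0$ term is $f(\lambda)\bm{I}_n$. Thus each block contributes $f(\lambda_k)\bm{I}_{m_{k,i}}+\sum_{q=1}^{m_{k,i}-1}\frac{f^{(q)}(\lambda_k)}{q!}\bm{S}_{m_{k,i}}^{\,q}$.

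Next I would translate these block-local objects into the global projectors and nilpotents. Let $\bm{E}_{k,i}$ denote the coordinate embedding that places the $(k,i)$ block inside $\bm{D}$; then $\bm{P}_{k,i}=\bm{U}\bm{E}_{k,i}\bm{I}_{m_{k,i}}\bm{E}_{k,i}^{\!\top}\bm{U}^{-1}$ is precisely the spectral projector onto the $(k,i)$ invariant subspace, and $\bm{N}_{k,i}=\bm{U}\bm{E}_{k,i}\bm{S}_{m_{k,i}}\bm{E}_{k,i}^{\!\top}\bm{U}^{-1}$ is the corresponding embedded shift, so that $\bm{N}_{k,i}^{\,q}=\bm{U}\bm{E}_{k,i}\bm{S}_{m_{k,i}}^{\,q}\bm{E}_{k,i}^{\!\top}\bm{U}^{-1}$ for all $q\ge 1$ and $\bm{N}_{k,i}^{\,m_{k,i}}=\bm{0}$. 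Substituting the per-block expression from the previous step and conjugating by $\bm{U}$ turns $f(\lambda_k)\bm{I}_{m_{k,i}}$ into $f(\lambda_k)\bm{P}_{k,i}$ and $\bm{S}_{m_{k,i}}^{\,q}$ into $\bm{N}_{k,i}^{\,q}$; summing over all $(k,i)$ and collecting the two groups of terms yields exactly \eqref{eq2: thm: Spectral Mapping Theorem for Single Variable}.

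I expect the only genuine obstacle to be bookkeeping rather than analysis: one must verify that the embeddings $\bm{E}_{k,i}$ produce mutually orthogonal projectors summing to $\bm{I}$ and nilpotents satisfying $\bm{N}_{k,i}\bm{N}_{k',i'}=\bm{0}$ for $(k,i)\ne(k',i')$, so that no cross terms survive when the blocks are reassembled and the decomposition \eqref{eq1: thm: Spectral Mapping Theorem for Single Variable} is consistent with \eqref{eq1: matrix setup GDO for Nilpotent}. An alternative route that sidesteps this bookkeeping is the holomorphic functional calculus: expand the resolvent as $(z\bm{I}-\bm{X})^{-1}=\sum_{k,i}\big[\frac{\bm{P}_{k,i}}{z-\lambda_k}+\sum_{q=1}^{m_{k,i}-1}\frac{\bm{N}_{k,i}^{\,q}}{(z-\lambda_k)^{q+1}}\big]$, a finite sum by nilpotency, and insert it into $f(\bm{X})=\frac{1}{2\pi i}\oint_\Gamma f(z)(z\bm{I}-\bm{X})^{-1}\,dz$; the Cauchy differentiation formula then reads off $f(\lambda_k)$ and $f^{(q)}(\lambda_k)/q!$ as the residues, giving \eqref{eq2: thm: Spectral Mapping Theorem for Single Variable} directly.
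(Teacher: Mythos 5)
Your proposal is correct, but note that the paper itself contains no proof of this statement: Theorem~\ref{thm: Spectral Mapping Theorem for Single Variable} is simply recalled from Theorem~1 of~\cite{chang2024operatorChar}, and the paper then uses it to derive the block form \eqref{eq2: matrix setup GDO for Nilpotent}. Your argument runs in the opposite direction and is self-contained: you start from the power series (well defined because $\rho(\bm{X})<R$), obtain $f(\bm{X})=\bm{U}\bigl(\bigoplus_{k,i}f(\bm{J}_{m_{k,i}}(\lambda_k))\bigr)\bm{U}^{-1}$ by similarity invariance and blockwise action, invoke the per-block formula \eqref{eq: f Jordan Block} in the form $f(\bm{J}_n(\lambda))=\sum_{q=0}^{n-1}\tfrac{f^{(q)}(\lambda)}{q!}\bm{S}_n^{\,q}$, and then re-express everything through the embedded projectors $\bm{P}_{k,i}=\bm{U}\bm{E}_{k,i}\bm{E}_{k,i}^{\!\top}\bm{U}^{-1}$ and nilpotents $\bm{N}_{k,i}=\bm{U}\bm{E}_{k,i}\bm{S}_{m_{k,i}}\bm{E}_{k,i}^{\!\top}\bm{U}^{-1}$. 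The bookkeeping you flag as the only obstacle is in fact immediate from $\bm{E}_{k,i}^{\!\top}\bm{E}_{k',i'}=\delta_{(k,i),(k',i')}\bm{I}_{m_{k,i}}$, which yields idempotence, mutual annihilation, $\sum_{k,i}\bm{P}_{k,i}=\bm{I}$, and $\bm{N}_{k,i}^{\,q}=\bm{U}\bm{E}_{k,i}\bm{S}_{m_{k,i}}^{\,q}\bm{E}_{k,i}^{\!\top}\bm{U}^{-1}$, so your construction is consistent with the decomposition \eqref{eq1: matrix setup GDO for Nilpotent} and with what the $\bm{P}_{k,i}$, $\bm{N}_{k,i}$ in \eqref{eq1: thm: Spectral Mapping Theorem for Single Variable} are meant to denote. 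Two small points deserve explicit mention if you write this up: first, passing from $\sum_j a_j(\lambda\bm{I}+\bm{S}_n)^j$ to $\sum_q \tfrac{f^{(q)}(\lambda)}{q!}\bm{S}_n^{\,q}$ requires rearranging a double sum, justified by absolute convergence of $\sum_j a_j\binom{j}{q}\lambda^{j-q}$ for $|\lambda|<R$ (this is exactly where analyticity, not mere $(n-1)$-fold differentiability, is used when $f$ is given as a series); second, your alternative route via the resolvent expansion and the Cauchy integral is equally valid and arguably cleaner, since the coefficients $f(\lambda_k)$ and $f^{(q)}(\lambda_k)/q!$ appear directly as residues without any series rearrangement. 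Either route proves the theorem; the paper's own text gives you nothing to match against beyond the citation.
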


We will have the following expression of $f(\bm{X})$ from Theorem~\ref{thm: Spectral Mapping Theorem for Single Variable}:
\begin{eqnarray}\label{eq2: matrix setup GDO for Nilpotent}
f(\bm{X})&=& \bm{U}\left(\bigoplus\limits_{k=1}^{K}\bigoplus\limits_{i=1}^{\alpha_{k}^{(\mathrm{G})}}f(\bm{J}_{m_{k,i}}(\lambda_{k}))\right)\bm{U}^{-1},
\end{eqnarray}
where $\bm{J}_{m_{k,i}}(\lambda_{k})$ denotes a Jordan block of size $m_{k,i}$ associated with eigenvalue $\lambda_k$ and its $i$-th component. Then, we ask the following Problem~\ref{prob:nilpotency} (related to nilpotency) and Problem~\ref{prob:diagonalizability} (related to diagonalizability) about $f(\bm{X})$.

\begin{problem}[Nilpotency of $f(\bm{X})$]\label{prob:nilpotency}
Given a function $f$, we wish to determine for the matrix $\bm{X}$:
\begin{enumerate}
    \item The \textbf{existence} of $\bm{X}$ such that $f(\bm{X})$ is nilpotent (i.e., all its eigenvalues are zero).
    \item The \textbf{number} of distinct similarity classes of $\bm{X}$ (equivalently, the count of different Jordan block structures of $\bm{X}$) for which this nilpotency condition holds.
\end{enumerate}

The matrix $\bm{X}$ is considered in its Jordan canonical form, so the transformation is given by Eq.~\eqref{eq2: matrix setup GDO for Nilpotent}. The nilpotency of $f(\bm{X})$ is equivalent to the condition that every matrix $f(\bm{J}_{m_{k,i}}(\lambda_{k}))$ in the direct sum is itself nilpotent.
\end{problem}

\begin{problem}[Diagonalizability of $f(\bm{X})$]\label{prob:diagonalizability}
Given a function $f$ and a matrix $\bm{X}$, we wish to determine:
\begin{enumerate}
    \item The existence of $\bm{X}$ such that $f(\bm{X})$ is diagonalizable
    \item The number of distinct similarity classes of $\bm{X}$ (i.e., count how many different Jordan block structures exist)
\end{enumerate}
under the condition that all off-diagonal entries of $f(\bm{J}_{m_{k,i}}(\lambda_{k}))$ are zero for all $m_{k,i}$ and $\lambda_k$.

From Eq.~\eqref{eq2: matrix setup GDO for Nilpotent}, the block-diagonal structure of $f(\bm{X})$ can be expressed by the block-diagonal structures of  $f(\bm{J}_{m_{k,i}}(\lambda_{k}))$ as :
\[
\operatorname{diag}(f(\bm{X})) = \left(\operatorname{diag}(f(\bm{J}_{m_{1,1}}(\lambda_{1}))), \ldots, \operatorname{diag}(f(\bm{J}_{m_{K,\alpha_{K}^{(\mathrm{G})}}}(\lambda_{K})))\right).
\]
This shows that the diagonal of $f(\bm{X})$ is the direct sum of the diagonals of all the Jordan blocks after applying $f$.
\end{problem}

\section{Nilpotency Constraints}\label{sec:Nilpotency Constraints}

In this section, we analyze the nilpotency condition of $f(\bm{X})$ in two scenarios: first, when there is only one distinct eigenvalue, as discussed in Section~\ref{sec:One Distinct Eigenvalue_N}, and second, the case where multiple distinct eigenvalues appear in $\bm{Z}$, which is presented in Section~\ref{sec':Multiple Distinct Eigenvalues_N}

\subsection{One Distinct Eigenvalue}\label{sec:One Distinct Eigenvalue_N}

\subsubsection{Sparse Coefficients of $f(x)$}\label{sec:S_One Distinct Eigenvalue_N}

\paragraph{Eigenvalues Existence and Counting}\label{sec:eig_exist_count_S_One Distinct Eigenvalue_N}

Because there is only one distinct eigenvalue, from Eq.~\eqref{eq1: matrix setup GDO for Nilpotent}, we have 
\begin{eqnarray}\label{eq1:S_One Distinct Eigenvalue_N}
\bm{X}&=& \bm{U}\left(\bigoplus\limits_{i=1}^{\alpha_{1}^{(\mathrm{G})}}\bm{J}_{m_{1,i}}(\lambda_{1})\right)\bm{U}^{-1}.
\end{eqnarray}
If we allow the solution of eigenvalue to be a complex number, then $f(\lambda_1)$ will always have at least one complex-valued eigenvalue as a solution if the polynomial degree of $f$ is greater or equal than one. 

Let us discuss some special cases of the polynomial $f(x)$. Let $f(x) = a_0 + a_1 x + a_2 x^2 + \cdots + a_n x^n$ be a real polynomial with ( $a_n \neq 0$). Descartes’ Rule of Signs states that the number of positive real roots of ( f(x) ), counted with multiplicities, does not exceed the number of sign changes in the ordered sequence of its nonzero coefficients $(a_0, a_1, a_2, \ldots, a_n)$~\cite{machi2012algebra}. Moreover, the difference between this number of sign changes and the actual number of positive roots is always an even integer~\cite{Stoer2002Introduction,basu2006algorithms}.

To estimate the number of negative real roots, the rule is applied to the transformed polynomial ( f(-x) ). Therefore, Descartes’ Rule provides an \emph{upper bound}—rather than an exact count—on the number of positive or negative real roots of a given polynomial. This rule is useful in analyzing the root structure of real polynomials when combined with complementary tools such as Sturm’s Theorem or the Argument Principle.

\subparagraph{Sturm’s Theorem and Exact Root Counts for Sparse Polynomials}

Descartes’ Rule of Signs provides only an \emph{upper bound} on the number of positive or negative real roots of a polynomial with real coefficients. However, Sturm’s Theorem can determine the \emph{exact number} of distinct real roots in a prescribed interval. This distinction is particularly valuable for \emph{sparse polynomials} (fewnomials), which contain only a small number of nonzero terms, since their sparsity often makes sign-change counts misleading.

\subparagraph{Sturm’s Sequence and Sturm’s Theorem~\cite{sturmfels2002solving}}
For a real-coefficient polynomial \(f(x)\), the \emph{Sturm sequence} is defined recursively:
\[
f_0(x) = f(x), \quad f_1(x) = f'(x),
\]
and for \(k \geq 1\),
\[
f_{k+1}(x) = - \operatorname{rem}\!\big(f_{k-1}(x), f_k(x)\big),
\]
where \(\operatorname{rem}(a,b)\) denotes the remainder of dividing \(a(x)\) by \(b(x)\). The sequence terminates when the remainder becomes zero.

Let \(V(a)\) denote the number of sign variations in the sequence
\[
\big(f_0(a), f_1(a), f_2(a), \dots \big),
\]
after omitting any zeros. For real numbers \(a < b\) such that neither is a root of \(f(x)\),
\[
\#\{\text{distinct real roots of } f(x) \text{ in } (a,b)\} \;=\; V(a) - V(b).
\]

For fewnomials, Descartes’ Rule often grossly overestimates the number of positive roots, as many potential sign changes do not correspond to actual zeros. Sturm’s Theorem can remove this limitation by moving from \emph{bounds} to \emph{certainty}. Sturm’s Theorem provides the exact counting of distinct real roots in intervals such as \((0,\infty)\) for positive roots or \((-\infty,0)\) for negative roots. Moreover, if the polynomial $f(x)$ is a sparse polynomial, great complexities can be reduced in constructing and evaluating the Sturm sequence.

\begin{example}
Consider the sparse polynomial
\[
f(x) = x^5 - 7x^2 + 6.
\]

\begin{enumerate}
    \item \textbf{Sturm sequence construction.}
    \begin{align*}
        f_0(x) &= x^5 - 7x^2 + 6, \\
        f_1(x) &= f_0'(x) = 5x^4 - 14x.
    \end{align*}
    Now perform polynomial long division of \(f_0(x)\) by \(f_1(x)\):
    \[
        f_0(x) = \left(\tfrac{1}{5}x\right) f_1(x) + \left(-\tfrac{7}{5}x^2 + 6\right).
    \]
    Thus,
    \[
        f_2(x) = -\left(-\tfrac{7}{5}x^2 + 6\right) = \tfrac{7}{5}x^2 - 6.
    \]
    Next, divide \(f_1(x)\) by \(f_2(x)\):
    \begin{align*}
        f_1(x) &= (5x^4 - 14x), \quad f_2(x) = \tfrac{7}{5}x^2 - 6. \\
        \text{Quotient: } & \tfrac{25}{7}x^2, \quad \text{Remainder: } -14x + \tfrac{180}{7}x^2. 
    \end{align*}
    After simplification,
    \[
        f_3(x) = -\left(-14x + \tfrac{180}{7}x^2\right) = -\tfrac{180}{7}x^2 + 14x.
    \]
    Finally, divide \(f_2(x)\) by \(f_3(x)\). After computation, the remainder is a nonzero constant:
    \[
        f_4(x) = -\tfrac{539}{180}.
    \]
    Since \(f_4\) is constant, the sequence terminates. Thus, the Sturm sequence is
    \[
        \{f_0, f_1, f_2, f_3, f_4\}.
    \]

    \item \textbf{Sign variations for \(x > 0\).}
    \begin{itemize}
        \item As \(x \to 0^+\):
        \[
        f_0(0^+) = 6 > 0, \quad
        f_1(0^+) = -14(0^+) < 0, \quad
        f_2(0^+) = -6 < 0, \quad
        f_3(0^+) = 0^+ > 0, \quad
        f_4 = -\tfrac{539}{180} < 0.
        \]
        Signs: \((+, -, -, +, -)\).  
        Variations: \(+ \to -\) (1), \(- \to -\) (0), \(- \to +\) (1), \(+ \to -\) (1).  
        Total: \(3\).
        
        \item As \(x \to +\infty\):
        \[
        f_0(+\infty) > 0, \quad f_1(+\infty) > 0, \quad f_2(+\infty) > 0, \quad f_3(+\infty) < 0, \quad f_4 < 0.
        \]
        Signs: \((+, +, +, -, -)\).  
        Variations: \(+ \to +\) (0), \(+ \to +\) (0), \(+ \to -\) (1), \(- \to -\) (0).  
        Total: \(1\).
    \end{itemize}
    Therefore, the number of positive real roots is
    \[
    V(0^+) - V(+\infty) = 3 - 1 = 2.
    \]

    \item \textbf{Sign variations for \(x < 0\).}
    Evaluate the sequence as \(x \to -\infty\) and \(x \to 0^-\):
    \begin{itemize}
        \item As \(x \to -\infty\):
        \[
        f_0(-\infty) < 0, \quad f_1(-\infty) > 0, \quad f_2(-\infty) > 0, \quad f_3(-\infty) < 0, \quad f_4 < 0.
        \]
        Signs: \((- , + , + , - , -)\).  
        Variations: \(- \to +\) (1), \(+ \to +\) (0), \(+ \to -\) (1), \(- \to -\) (0).  
        Total: \(2\).
        
        \item As \(x \to 0^-\):
        \[
        f_0(0^-) = 6 > 0, \quad f_1(0^-) = -14(0^-) > 0, \quad f_2(0^-) = -6 < 0, \quad f_3(0^-) \approx 0^- < 0, \quad f_4 < 0.
        \]
        Signs: \((+, +, -, -, -)\).  
        Variations: \(+ \to +\) (0), \(+ \to -\) (1), \(- \to -\) (0), \(- \to -\) (0).  
        Total: \(1\).
    \end{itemize}
    Therefore, the number of negative real roots is
    \[
    V(-\infty) - V(0^-) = 2 - 1 = 1.
    \]

    \item \textbf{Conclusion.}  
    The polynomial \(f(x) = x^5 - 7x^2 + 6\) has exactly
    \[
    2 \ \text{positive real roots, and } 1 \ \text{negative real root.}
    \]
    Hence, the total number of real roots is \(3\).
\end{enumerate}
\end{example}

In summary, while Descartes’ Rule offers only possible counts, Sturm’s Theorem provides the \emph{exact number} of distinct real roots of sparse polynomials within any given real interval.

\paragraph{Counting Jordan Block Structures for a Single Eigenvalue}\label{sec':Jordan_count_S_One Distinct Eigenvalue_N}

Consider a matrix $\bm X \in M_m(\mathbb{C})$ with a single distinct eigenvalue $\lambda_1$, represented in its Jordan canonical form as
\begin{equation}\label{eq:X_Jordan_single}
\bm{X} = \bm{U} \left( \bigoplus_{i=1}^{\alpha_1^{(\mathrm{G})}} \bm{J}_{m_{1,i}}(\lambda_1) \right) \bm{U}^{-1},
\end{equation}
where $\sum_{i=1}^{\alpha_1^{(\mathrm{G})}} m_{1,i} = m$.


If there are $n_d$ distinct roots $\lambda$ of a polynomial $f$ such that $f(\lambda)=0$, then all possible Jordan block structures of $\bm X$ correspond to the integer partitions of $m$. Specifically, each Jordan form is uniquely determined by a partition
\[
m = m_{1,1} + m_{1,2} + \cdots + m_{1,\alpha_1^{(\mathrm{G})}}, \qquad m_{1,1} \ge m_{1,2} \ge \dots \ge m_{1,\alpha_1^{(\mathrm{G})}} \ge 1,
\]
yielding the Jordan blocks
\[
\bm{X} \sim \bigoplus_{i=1}^{\alpha_1^{(\mathrm{G})}} \bm{J}_{m_{1,i}}(\lambda_1),
\]
where permutations of blocks are treated as equivalent. The total number of distinct Jordan structures is therefore ${n_d \choose 1}p(m)$, where $p(m)$ is the partition number. 

\begin{example}
For $m=4$ with the degree 2 polynomial $f$ with distinct roots $\lambda_1$ and $\lambda_2$, the possible Jordan block structures are:
\[
\begin{aligned}
4 &\mapsto \bm J_4(\lambda_1), \\
3+1 &\mapsto \bm J_3(\lambda_1) \oplus \bm J_1(\lambda_1), \\
2+2 &\mapsto \bm J_2(\lambda_1) \oplus \bm J_2(\lambda_1), \\
2+1+1 &\mapsto \bm J_2(\lambda_1) \oplus \bm J_1(\lambda_1) \oplus \bm J_1(\lambda_1), \\
1+1+1+1 &\mapsto \bm J_1(\lambda_1)\oplus\bm J_1(\lambda_1)\oplus\bm J_1(\lambda_1)\oplus\bm J_1(\lambda_1).
\end{aligned}
\]
and
\[
\begin{aligned}
4 &\mapsto \bm J_4(\lambda_2), \\
3+1 &\mapsto \bm J_3(\lambda_2) \oplus \bm J_1(\lambda_2), \\
2+2 &\mapsto \bm J_2(\lambda_2) \oplus \bm J_2(\lambda_2), \\
2+1+1 &\mapsto \bm J_2(\lambda_2) \oplus \bm J_1(\lambda_2) \oplus \bm J_1(\lambda_2), \\
1+1+1+1 &\mapsto \bm J_1(\lambda_2)\oplus\bm J_1(\lambda_2)\oplus\bm J_1(\lambda_2)\oplus\bm J_1(\lambda_2).
\end{aligned}
\]
\end{example}

\subsubsection{Dense Coefficients of $f(x)$}\label{sec':D_One Distinct Eigenvalue_N}

In this section, we assume that the polynomial $f$ is a dense polynomial. 

\paragraph{Existence of Roots}

We will discuss two methods:  \textbf{Argument Principle} and  \textbf{Rouché’s Theorem}, for the roots of existence and compare these two methods. 

\subparagraph{1. Argument Principle for Root Counting}

For a holomorphic function \(p(x)\), the number of zeros \(N\) inside a closed contour \(C\) is given by the \textbf{Argument Principle}~\cite{beardon2019complex}:

\[
N = \frac{1}{2\pi i} \oint_C \frac{p'(x)}{p(x)} \, dz
\]

To count zeros in the annulus 
\[
\alpha_1 < |z| < R_1,
\] 
consider the contour \(C\) consisting of:

\begin{itemize}
    \item \textbf{Outer circle:} \( |z| = R_1 \), traversed counterclockwise  
    \item \textbf{Inner circle:} \( |z| = \alpha_1 \), traversed clockwise  
\end{itemize}

Then the number of zeros in the annulus is:
\begin{eqnarray}\label{eq:the number of zeros in the annulus}
N_{\text{annulus}} 
= \frac{1}{2\pi i} \left( 
\oint_{|z|=R_1} \frac{p'(x)}{p(x)} \, dz
-
\oint_{|z|=\alpha_1} \frac{p'(x)}{p(x)} \, dz
\right).
\end{eqnarray}

As the polynomial $f$ in Eq.~\eqref{eq2: matrix setup GDO for Nilpotent} is a holomorphic function, we can apply Eq.~\eqref{eq:the number of zeros in the annulus} by setting $p=f$ to get the number of roots of $f$ in an annulus. 

\subparagraph{2. Rouché’s Theorem for Existence}

Let us recall \textbf{Rouché’s Theorem}~\cite{tsarpalias1989version}:
  
Let \(f(x)\) and \(g(x)\) be holomorphic inside and on a contour \(C\).  
If 
\[
|f(x)| > |g(x)|, \quad \forall z \in C,
\]  
then \(f(x)\) and \(f(x)+g(x)\) have the \textbf{same number of zeros} inside \(C\) by counting multiplicity.

\subparagraph{3. Comparison of Methods}

We compare two classical methods in complex analysis that are frequently employed for studying the distribution of zeros of analytic functions. The first method is the \textbf{Argument Principle}, which provides an exact count of the number of zeros (and poles) of a function within a given annulus. Its principal advantage lies in its precision and constructiveness, as it offers a direct procedure for determining the number of zeros through contour integration. However, this same requirement also constitutes its main limitation, since performing contour integrations can be technically demanding and computationally intensive in practice.

In contrast, \textbf{Rouché's Theorem} guarantees the existence of zeros within a region by comparing two analytic functions through inequalities. This approach is more analytical in nature and relies on bounding arguments rather than explicit integration. Its strength lies in the elegant use of inequality-based reasoning to establish the presence and stability of zeros under perturbations. Nonetheless, a key limitation of Rouché’s theorem is that it does not directly yield the exact number of zeros within the annulus, providing instead a qualitative criterion for their existence and preservation.

For dense polynomials, the number of similarity classes can be evaluated by the same method given by Section~\ref{sec':Jordan_count_S_One Distinct Eigenvalue_N}. But, we should be careful that both \textbf{Argument Principle} and \textbf{Rouché's Theorem} count multiplicities.   

\subsection{Multiple Distinct Eigenvalues}\label{sec':Multiple Distinct Eigenvalues_N}

In this section, we will consider the argument matrix $\bm{X}$ with the following Jordan decompostion form and $K>1$:
\begin{eqnarray}\label{eq1: Multiple Distinct Eigenvalues_N}
\bm{X}&=& \bm{U}\left(\bigoplus\limits_{k=1}^{K}\bigoplus\limits_{i=1}^{\alpha_{k}^{(\mathrm{G})}}\bm{J}_{m_{k,i}}(\lambda_{k})\right)\bm{U}^{-1},
\end{eqnarray}

\subsubsection{Sparse Coefficients}\label{sec':S_Multiple Distinct Eigenvalue_N}

\paragraph{Existence of $n_d$ distinct roots of $f$}

Let \(f(x)\in\mathbb{C}[x]\) be a polynomial of degree \(n\).  We present precise algebraic conditions which determine the number \(n_d\) of distinct complex roots of \(f\).  The main tools are the derivative \(f'\), the greatest common divisor \(\gcd(f,f')\), the discriminant \(\Delta(f)\) and square--free factorization, where $\gcd(f,f')$ is the greatest common divisor of the functions $f$ and $f'$. We state and prove a compact identity relating \(n_d\) and \(\gcd(f,f')\), derive equivalent criteria for all roots to be simple, and give practical checks with illustration examples.

Let
\[
f(x)=a_n x^n+a_{n-1}x^{n-1}+\cdots+a_0\in\mathbb{C}[x],\qquad a_n\neq0,
\]
and write its factorization (over \(\mathbb{C}\)) as
\begin{eqnarray}\label{eq:f factor form}
f(x)=a_n\prod_{j=1}^{n_d}(x-\alpha_j)^{m_j},
\end{eqnarray}
where the \(\alpha_j\) are the distinct roots, \(m_j\ge1\) are their multiplicities, and
\[
\sum_{j=1}^{n_d} m_j = n.
\]
We denote by \(n_d\) the number of distinct roots of \(f\).  The derivative is \(f'(x)\).  The polynomial \(f\) is called \emph{square-free} if every \(m_j=1\). 

We have Theorem~\ref{thm:gcd}  below to determine the value of $n_d$.

\begin{theorem}[Distinct-root count via gcd with derivative]\label{thm:gcd}
Let \(f\in\mathbb{C}[x]\) be nonconstant of degree \(n\).  Then
\[
\deg\big(\gcd(f,f')\big)\;=\;\sum_{j=1}^{n_d}(m_j-1)\;=\;n-n_d.
\]
Consequently,
\[
n_d \;=\; n - \deg\big(\gcd(f,f')\big).
\]
\end{theorem}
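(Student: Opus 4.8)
The plan is to work directly from the factorization in Eq.~\eqref{eq:f factor form}, compute $f'$ by the product rule, and identify the greatest common divisor explicitly by tracking the multiplicity of each distinct root $\alpha_j$ in both $f$ and $f'$. The key observation is that $\gcd(f,f')$ is itself a product of the factors $(x-\alpha_j)$ raised to appropriate powers, so the problem reduces to determining, for each $j$, the exact power of $(x-\alpha_j)$ dividing $f'$.

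First I would differentiate the factored form. Writing $f = a_n \prod_{j=1}^{n_d}(x-\alpha_j)^{m_j}$, the logarithmic-derivative identity gives
\[
f'(x) = a_n \sum_{\ell=1}^{n_d} m_\ell (x-\alpha_\ell)^{m_\ell - 1}\prod_{j\neq \ell}(x-\alpha_j)^{m_j}.
\]
From this expression one sees that every term in the sum is divisible by $(x-\alpha_j)^{m_j-1}$ for a fixed $j$: the $\ell = j$ term carries exactly $(x-\alpha_j)^{m_j-1}$, while every term with $\ell \neq j$ carries the full factor $(x-\alpha_j)^{m_j}$. Hence $(x-\alpha_j)^{m_j-1}$ divides $f'$. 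To show the power is \emph{exactly} $m_j-1$ and no higher, I would substitute and factor out $(x-\alpha_j)^{m_j-1}$: the quotient evaluated at $x=\alpha_j$ leaves only the $\ell=j$ term surviving (all other terms still contain a positive power of $(x-\alpha_j)$), and that surviving term equals $a_n\, m_j \prod_{j'\neq j}(\alpha_j-\alpha_{j'})^{m_{j'}}$, which is nonzero because $m_j \neq 0$ and the $\alpha_{j'}$ are distinct. This confirms the valuation of $f'$ at $\alpha_j$ is precisely $m_j - 1$.

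Next I would assemble the gcd. Since $f$ and $f'$ are both products of the linear factors $(x-\alpha_j)$ (and $f'$ has no new roots relevant to the common divisor beyond these, as any common zero of $f$ and $f'$ must be a root of $f$), the greatest common divisor is
\[
\gcd(f,f') = \prod_{j=1}^{n_d}(x-\alpha_j)^{\min(m_j,\,m_j-1)} = \prod_{j=1}^{n_d}(x-\alpha_j)^{m_j-1}.
\]
Taking degrees yields $\deg\big(\gcd(f,f')\big) = \sum_{j=1}^{n_d}(m_j-1) = \sum_j m_j - n_d = n - n_d$, using $\sum_j m_j = n$. Rearranging gives the claimed formula $n_d = n - \deg\big(\gcd(f,f')\big)$.

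The main obstacle is the careful justification that the exponent of each $(x-\alpha_j)$ in $\gcd(f,f')$ is exactly $m_j-1$ rather than merely at least $m_j-1$; this requires the nonvanishing argument showing $f'$ does not pick up an extra factor at $\alpha_j$, which hinges on the roots being distinct so that $\prod_{j'\neq j}(\alpha_j-\alpha_{j'})^{m_{j'}}\neq 0$. A minor subtlety worth stating explicitly is that any common root of $f$ and $f'$ must already be a root of $f$, so no spurious factors from $f'$ enter the gcd; this follows immediately since $\gcd(f,f')$ divides $f$.
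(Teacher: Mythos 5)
Your proof is correct and follows essentially the same route as the paper's: factor $f$ over $\mathbb{C}$, differentiate via the product rule, and identify $\gcd(f,f')=c\prod_{j}(x-\alpha_j)^{m_j-1}$ before taking degrees. In fact, your evaluation argument showing the quotient $f'/(x-\alpha_j)^{m_j-1}$ is nonzero at $\alpha_j$ makes explicit the exact-multiplicity claim that the paper's proof only asserts in passing.
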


\begin{proof}

Factor \(f\) as Eq.~\eqref{eq:f factor form}.  Then
\[
f'(x)=a_n\sum_{k=1}^{n_d} m_k (x-\alpha_k)^{m_k-1}\prod_{j\neq k}(x-\alpha_j)^{m_j}.
\]
Hence each factor \((x-\alpha_k)^{m_k-1}\) divides \(\gcd(f,f')\), and no higher power divides \(\gcd(f,f')\) because at least one of \(f\) or \(f'\) has exact multiplicity \(m_k\) or \(m_k-1\) at \(x=\alpha_k\).  Therefore
\[
\gcd(f,f')=c\prod_{j=1}^{n_d} (x-\alpha_j)^{m_j-1}
\]
for some nonzero constant \(c\in\mathbb{C}\), and taking degrees yields the identity.
\end{proof}

\begin{corollary}[Simple-root criterion / separability]
The polynomial \(f\) has \(n_d=n\) (i.e. all roots simple) if and only if \(\gcd(f,f')=1\). Equivalently, the discriminant \(\Delta(f)\neq 0\).
\end{corollary}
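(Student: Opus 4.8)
The plan is to obtain the corollary as an immediate consequence of Theorem~\ref{thm:gcd}, whose degree identity $n_d = n - \deg\big(\gcd(f,f')\big)$ does almost all of the work. First I would note that, with $\gcd(f,f')$ normalized to be monic over the field $\mathbb{C}$, having $\deg\big(\gcd(f,f')\big) = 0$ is the same as having $\gcd(f,f') = 1$. Substituting into the identity then gives $n_d = n$ if and only if $\deg\big(\gcd(f,f')\big) = 0$, i.e. if and only if $\gcd(f,f') = 1$. This settles the first equivalence without additional computation.

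For the discriminant criterion I would argue in one of two ways. The direct route uses the factorization in Eq.~\eqref{eq:f factor form}: $f$ has all simple roots exactly when every $m_j = 1$, which by Theorem~\ref{thm:gcd} is precisely the condition $\deg\big(\gcd(f,f')\big) = 0$ already characterized above. The alternative, and the one that makes the discriminant explicit, is to invoke the classical relation $\Delta(f) = \frac{(-1)^{n(n-1)/2}}{a_n}\operatorname{Res}(f,f')$ together with the standard fact that $\operatorname{Res}(f,f') = 0$ if and only if $f$ and $f'$ share a common root in $\mathbb{C}$, equivalently if and only if $\gcd(f,f')$ is nonconstant. Since $a_n \neq 0$ the scalar prefactor never vanishes, so $\Delta(f) \neq 0$ holds exactly when $\gcd(f,f') = 1$. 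Chaining the equivalences yields $n_d = n \Longleftrightarrow \gcd(f,f') = 1 \Longleftrightarrow \Delta(f) \neq 0$.

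A preliminary point I would record is that $f'$ is genuinely nonzero: because $\mathbb{C}$ has characteristic zero and $f$ is nonconstant of degree $n \geq 1$, the derivative has degree exactly $n-1 \geq 0$. Hence $\gcd(f,f')$ and $\operatorname{Res}(f,f')$ are both well defined, and none of the separability subtleties that arise in positive characteristic can occur here.

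The logical structure is immediate, so the only real obstacle is bookkeeping at the level of conventions: ensuring that $\deg = 0$ is identified with the unit $1$ under the chosen gcd normalization, and stating the discriminant--resultant identity with the correct sign and leading-coefficient factor. To keep the argument self-contained I would favor the direct route through Eq.~\eqref{eq:f factor form}, deducing the multiple-root characterization of $\gcd(f,f')$ from the theorem and mentioning the discriminant identity only to connect the statement to the classical separability criterion $\Delta(f)\neq 0$.
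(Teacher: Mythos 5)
Your proposal is correct, and the core of it---deducing $n_d = n \Leftrightarrow \deg\bigl(\gcd(f,f')\bigr)=0 \Leftrightarrow \gcd(f,f')=1$ directly from the degree identity of Theorem~\ref{thm:gcd}---is exactly what the paper does. The only genuine divergence is in how the discriminant equivalence is justified. The paper invokes the explicit product formula $\Delta(f)=a_n^{2n-2}\prod_{i<j}(\alpha_i-\alpha_j)^2$, which vanishes precisely when two roots coincide, so the discriminant is tied directly to the distinctness of roots (i.e.\ to $n_d=n$). You instead route through the resultant identity $\Delta(f)=\frac{(-1)^{n(n-1)/2}}{a_n}\operatorname{Res}(f,f')$ and the fact that $\operatorname{Res}(f,f')=0$ exactly when $f$ and $f'$ share a root, so your discriminant criterion is tied directly to the coprimality $\gcd(f,f')=1$. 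Both are classical and both close the chain of equivalences; the paper's route is slightly more self-contained given that Eq.~\eqref{eq:f factor form} already displays the roots, while yours has the advantage of never mentioning the roots explicitly and of making the link $\Delta(f)\neq 0 \Leftrightarrow \gcd(f,f')=1$ a one-line consequence of a single identity. Your preliminary remark that $f'\neq 0$ in characteristic zero is a sound (if unnecessary over $\mathbb{C}$) precaution that the paper omits; it is where the statement would actually need care over fields of positive characteristic.
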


\begin{proof}
If all roots are simple then each \(m_j=1\), so by Theorem \ref{thm:gcd} \(\deg\gcd(f,f')=0\) and the gcd is constant.  Conversely, if \(\gcd(f,f')=1\) then \(\deg\gcd=0\) and so \(n_d=n\).  The equivalence with \(\Delta(f)\neq0\) is standard: \(\Delta(f)=a_n^{2n-2}\prod_{i<j}(\alpha_i-\alpha_j)^2\), vanishing exactly when there are some roots duplications.
\end{proof}

Below we will provide another Theorem~\ref{thm:square free fac for n d} to determine the number $n_d$ via square-free factorization.
\begin{theorem}[Square--free factorization and number of distinct roots]\label{thm:square free fac for n d}
Let \(f\in\mathbb{C}[x]\) have square--free decomposition
\[
f(x)=c\prod_{k\ge1} g_k(x)^k,
\]
where \(c\in\mathbb{C}\setminus\{0\}\), each \(g_k\in\mathbb{C}[x]\) is square--free, and the polynomials \(g_k\) are pairwise coprime.  Let \(n_d\) be the number of distinct complex roots of \(f\), then
\[
n_d=\sum_{k\ge1}\deg(g_k).
\]
\end{theorem}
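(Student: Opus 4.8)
The plan is to read off the distinct roots of $f$ directly from the square--free decomposition, invoking only two structural hypotheses: the pairwise coprimality of the $g_k$ and the square--freeness of each $g_k$. First I would record that the product $\prod_{k\ge1} g_k^k$ is effectively finite, since $\deg f = \sum_{k\ge1} k\,\deg(g_k)$ forces all but finitely many $g_k$ to be constant. Then, writing $Z(h)$ for the set of complex roots of a polynomial $h$, I would observe that because $c\neq0$ and $f=c\prod_{k\ge1}g_k^{k}$, a point $\alpha\in\mathbb{C}$ satisfies $f(\alpha)=0$ if and only if $g_k(\alpha)=0$ for at least one index $k$. Hence the zero set of $f$ equals the union $\bigcup_{k\ge1}Z(g_k)$.

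Next I would argue that this union is disjoint. Since $\mathbb{C}[x]$ is a unique factorization domain, the hypothesis $\gcd(g_k,g_\ell)=1$ for $k\neq\ell$ implies that $g_k$ and $g_\ell$ share no common linear factor, hence no common root, so $Z(g_k)\cap Z(g_\ell)=\emptyset$. The zero set of $f$ therefore decomposes as a disjoint union $\bigsqcup_{k\ge1}Z(g_k)$.

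Finally I would count. Because each $g_k$ is square--free and $\mathbb{C}$ is algebraically closed, $g_k$ factors into \emph{distinct} linear factors, so it has exactly $\deg(g_k)$ distinct roots, i.e. $|Z(g_k)|=\deg(g_k)$ (constant factors contributing $0$). Combining the disjoint--union description with additivity of cardinality gives
\[
n_d=\Big|\,\bigsqcup_{k\ge1}Z(g_k)\,\Big|=\sum_{k\ge1}|Z(g_k)|=\sum_{k\ge1}\deg(g_k),
\]
which is the claimed identity.

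I do not anticipate a genuine obstacle; the proof hinges on two elementary reductions that merely require care: (i) that coprimality over the \emph{field} $\mathbb{C}$ truly forces disjoint root sets, and (ii) that square--freeness over an algebraically closed field is equivalent to all roots being simple, so that the root count of $g_k$ equals its degree. As an alternative route tied to the earlier development, one could instead identify each $g_k$ explicitly as the monic product $\prod_{j:\,m_j=k}(x-\alpha_j)$ over the distinct roots of multiplicity exactly $k$---which follows from uniqueness of the square--free decomposition together with the factorization in Eq.~\eqref{eq:f factor form}---after which the same total emerges by partitioning the $n_d$ distinct roots according to their multiplicity.
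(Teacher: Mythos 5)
Your proof is correct and takes essentially the same route as the paper's own argument: both identify the set of distinct roots of $f$ as the disjoint union of the root sets $Z(g_k)$ (disjointness from pairwise coprimality) and then use square--freeness together with the algebraic closure of $\mathbb{C}$ to conclude $|Z(g_k)|=\deg(g_k)$, summing over $k$. Your additional remarks (effective finiteness of the product, the explicit UFD justification for disjoint root sets) merely make explicit steps the paper leaves implicit, so there is no substantive difference.
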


\begin{proof}
Since \( f(x) = c \prod_{k \ge 1} g_k(x)^k \), the roots of \( f \) are exactly the roots of the polynomials \( g_k \), because \( c \neq 0 \). Moreover, since the \( g_k \) are pairwise coprime, they have no common roots.  Thus, the set of distinct roots of \( f \) is the disjoint union of the sets of roots of the \( g_k \). Each \( g_k \) is square-free over \( \mathbb{C} \).  Over \( \mathbb{C} \), a polynomial is square-free if and only if it has no repeated roots.  Therefore, the number of distinct roots of \( g_k \) is exactly \( \deg(g_k) \), because \( \mathbb{C} \) is algebraically closed and \( g_k \) is square-free.

Let \( R_k \) be the set of roots of \( g_k \). Then: 1) \( |R_k| = \deg(g_k) \), 2) \( R_k \cap R_\ell = \varnothing \) for \( k \ne \ell \). The set of distinct roots of \( f \) is \( \bigcup_{k \ge 1} R_k \), a disjoint union. Hence:
\[
n_d = \sum_{k \ge 1} |R_k| = \sum_{k \ge 1} \deg(g_k).
\]
This completes the proof:
\[
n_d = \sum_{k \ge 1} \deg(g_k).
\]
\end{proof}

Remark below discusses the relationship between sparsity and classical bounds on the number of roots of a polynomial \(f\).

\begin{remark}

\begin{itemize}
  \item Although sparsity in the coefficient domain might suggest algebraic simplicity, it does not necessarily imply a small number of distinct roots. Let us consider a simple example as 
\[
f(x) = x^n - 1,
\]
which consists of only two non-zero coefficients yet admits~$n$ distinct complex roots uniformly distributed on the unit circle. This example demonstrates that \emph{sparsity and root multiplicity are fundamentally different notions}: sparsity constrains the representation of a polynomial in the coefficient space, while the number of roots reflects its analytic complexity in the variable space. Consequently, bounding the number of zeros of a sparse polynomial requires additional structural conditions---such as restricting to real coefficients or positive real roots---where combinatorial tools like \emph{Descartes’ Rule of Signs} or \emph{Khovanskii’s fewnomial theory} become effective~\cite{khovanskiui1991fewnomials}.

  \item Many classical results bounding the number of roots in terms of the number of nonzero terms apply only to \emph{real} roots (e.g., Descartes’ rule of signs, fewnomial theory, Khovanskii-type bounds) or to roots constrained to particular regions or sectors.   For complex roots in the entire plane, the degree of \(f\) remains the fundamental upper bound.
  
  \item Geometric approaches, such as the use of \emph{Newton polygons} or \emph{mixed volume} techniques, provide powerful tools for analyzing systems of polynomial equations and for counting the number of isolated solutions in the algebraic torus \((\mathbb{C}^\ast)^n\), where \(\mathbb{C}^\ast = \mathbb{C} \setminus \{0\}\) denotes the set of nonzero complex numbers. These geometric methods exploit the combinatorial structure of the exponents of the monomials and the geometry of their associated convex polytopes. Then the number of roots can be determined by shape informaiton, instead the degree of the polynomial system. However, in the univariate setting, such geometric refinements collapse to the classical fact that the total number of complex roots of a nonzero polynomial—counted with multiplicity—is completely determined by its degree. Thus, while geometric methods excel in the multivariate context, the degree alone governs the root count in one dimension.

\end{itemize}
\end{remark}

Following examples are provided to reflect aforementioned notions. 

\begin{example}
This example is provided to count $n_d$. 
\vspace{0.5em} 
\begin{enumerate}
  \item \(f(x)=x^n-1\). Here \(f' = n x^{n-1}\), so \(\gcd(f,f')=1\) and \(n_d=n\).
  \item \(f(x)=(x-1)^2(x-2)^3\). Then \(\gcd(f,f')=(x-1)(x-2)^2\) has degree \(3\); \(n=5\) and \(n_d=5-3=2\) (the distinct roots are \(1,2\)).
  \item Sparse example with repeated roots: 
\[
f(x) = x^{10} + 2x^5 + 1 = (x^5 + 1)^2.
\]
Thus \(f\) has exactly \(5\) distinct roots, each of multiplicity \(2\).  
In general, to detect such multiplicities one computes \(\gcd(f,f')\).
\end{enumerate}
\end{example}

\paragraph{Counting Jordan Block Structures for Multiple Distinct Eigenvalues}\label{sec:Counting Jordan Block Structures for Multiple Distinct Eigenvalues sparse}

Let \(n_d\) be the number of distinct eigenvalues available (denote them \(\{\lambda_1,\dots,\lambda_{n_d}\}\)).  
Fix integers
\[
m\ge 1, \qquad 1\le K\le \min(n_d,m),
\]
where \(m\) is the matrix $\bm{X}$ dimension of $f(\bm{X})$ (so \(\sum\limits_{k=1}^K \alpha_k^{\mathrm{A}}=m\)) and we choose \(K\) distinct eigenvalues from the \(n_d\).  

For a chosen eigenvalue \(\lambda_k\) with algebraic multiplicity \(\alpha_k^{\mathrm{A}}\), the possible Jordan block patterns are in one-to-one correspondence with the integer partitions of \(\alpha_k^{\mathrm{A}}\).  
Again, let \(p(\alpha)\) be the partition number (the number of unordered partitions of the integer \(\alpha\)). We have following Theorem~\ref{thm:Counting Jordan Block Structures for Multiple Distinct Eigenvalues} to determine the number of different classes of Jordan forms to make $f(\bm{X})$ nilpotent. 

\begin{theorem}[Counting Jordan forms up to permutation of Jordan blocks]\label{thm:Counting Jordan Block Structures for Multiple Distinct Eigenvalues}
For a fixed ordered vector of algebraic multiplicities \((\alpha_1,\dots,\alpha_K)\) with \(\alpha_i\ge1\) and \(\sum\limits_{i=1}^K \alpha_i=m\),  
the number of Jordan-block patterns (for those \(K\) labeled eigenvalues) is
\[
\prod_{i=1}^K p(\alpha_i).
\]
If we first choose the \(K\) eigenvalues as an unordered \(K\)-subset from the \(n_d\) possibilities, there are \(\binom{n_d}{K}\) ways to choose them.  
For that chosen set, we must sum over all ordered compositions \((\alpha_1,\dots,\alpha_K)\) of \(m\) into \(K\) positive integers.  

Thus the total number of distinct Jordan-block forms (unique up to permutation of Jordan blocks and of the order of blocks) obtained by choosing \(K\) distinct eigenvalues is
\[
N(n_d,K,m)\;=\;\binom{n_d}{K}\;\sum_{\substack{\alpha_1+\cdots+\alpha_K=m \\ \alpha_k\ge1}}
\;\prod_{k=1}^K p(\alpha_k)
\]
\end{theorem}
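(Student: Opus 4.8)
The plan is to decompose the count into three independent combinatorial ingredients and then invoke the multiplication principle: (i) the selection of which $K$ eigenvalues appear, (ii) the distribution of the total dimension $m$ among those eigenvalues as algebraic multiplicities, and (iii) the choice of Jordan block sizes at each eigenvalue. The crucial structural fact, already visible in Eq.~\eqref{eq1: Multiple Distinct Eigenvalues_N}, is that the Jordan structure at one eigenvalue is completely independent of the structure at any other, so these choices multiply rather than interact.

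First I would establish the single-eigenvalue building block. For a fixed eigenvalue $\lambda_k$ with algebraic multiplicity $\alpha_k$, a Jordan structure is a direct sum $\bigoplus_i \bm J_{m_{k,i}}(\lambda_k)$ with $\sum_i m_{k,i}=\alpha_k$ and $m_{k,i}\ge 1$; since reordering the summands yields a similar matrix, such a structure is determined precisely by the multiset of block sizes, that is, by an unordered integer partition of $\alpha_k$. Hence there are exactly $p(\alpha_k)$ admissible patterns at $\lambda_k$, reusing the partition correspondence of Section~\ref{sec':Jordan_count_S_One Distinct Eigenvalue_N}. For a fixed ordered multiplicity vector $(\alpha_1,\dots,\alpha_K)$ the block choices at the $K$ eigenvalues are mutually independent, so the multiplication principle gives $\prod_{i=1}^K p(\alpha_i)$ patterns, which establishes the first assertion of the theorem.

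Next I would assemble the global count. Selecting which $K$ of the $n_d$ available roots of $f$ serve as eigenvalues is a choice of an unordered $K$-subset, contributing the factor $\binom{n_d}{K}$. Once such a subset is fixed, its elements are \emph{distinct} complex numbers and therefore inherently distinguishable, so assigning multiplicities to them amounts to selecting an \emph{ordered} composition $(\alpha_1,\dots,\alpha_K)$ of $m$ into $K$ positive parts; summing the per-vector count $\prod_{k=1}^K p(\alpha_k)$ over all such compositions yields the number of Jordan forms supported on that subset. Because this inner sum depends only on $m$ and $K$, and not on which specific eigenvalues were chosen, it factors out of the sum over subsets and produces $N(n_d,K,m)=\binom{n_d}{K}\sum_{\alpha_1+\cdots+\alpha_K=m,\ \alpha_k\ge1}\prod_{k=1}^K p(\alpha_k)$.

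The step I expect to be the main obstacle is justifying the apparent asymmetry between the \emph{unordered} eigenvalue selection and the \emph{ordered} multiplicity composition, and confirming that no similarity class is counted twice. The resolution is that distinct eigenvalues are labeled by their values: two compositions differing only by a permutation, say attaching $(\alpha,\alpha')$ versus $(\alpha',\alpha)$ to the same pair of eigenvalues, assign different multiplicities to different eigenvalues and hence yield genuinely non-similar matrices, so both must be counted. Conversely, the equivalence ``up to permutation of Jordan blocks'' is absorbed entirely into step (iii) through the use of unordered partitions at each eigenvalue, so it does not interact with the subset or composition counts. Verifying the underlying bijection—between similarity classes and triples consisting of a subset, a composition, and a tuple of partitions—on a small instance such as $K=2$, $n_d=2$, $m=3$ (which gives $4$ classes) furnishes a concrete consistency check and completes the argument.
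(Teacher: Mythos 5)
Your proposal is correct and follows essentially the same route as the paper's proof: fix the $K$-subset of eigenvalues, identify the Jordan structures at each eigenvalue with unordered partitions of its algebraic multiplicity, multiply these counts for a fixed ordered composition, sum over compositions, and multiply by $\binom{n_d}{K}$. Your added justification that ordered compositions do not double-count (because distinct eigenvalues are distinguishable by value) and your check on the instance $K=2$, $n_d=2$, $m=3$ are refinements of the same argument, not a different approach.
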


\begin{proof}
Fix \(K\) distinct eigenvalues \(\{\lambda_{i_1},\dots,\lambda_{i_K}\}\subseteq\{\lambda_1,\dots,\lambda_{n_d}\}\).  
For each eigenvalue \(\lambda_{i_k}\), let its algebraic multiplicity be \(\alpha_k\), so that
\[
\alpha_1+\cdots+\alpha_K=m, \qquad \alpha_k \ge 1.
\]

For a given eigenvalue \(\lambda_{i_k}\) with the algebraic multiplicity \(\alpha_k\), the possible Jordan decompositions correspond exactly to partitions of the integer \(\alpha_k\) into block sizes.  
Since partitions of \(\alpha_k\) are unordered, the number of such possibilities is the partition number \(p(\alpha_k)\).  
Therefore, for this fixed composition \((\alpha_1,\dots,\alpha_K)\), the number of Jordan-block structures is
\[
\prod_{k=1}^K p(\alpha_k).
\]

Next, since the eigenvalues are distinct, assigning the multiplicities \((\alpha_1,\dots,\alpha_K)\) to the chosen eigenvalues yields a unique Jordan structure (up to permutation of blocks).  
Summing over all ordered compositions \((\alpha_1,\dots,\alpha_K)\) of \(m\) into \(K\) positive integers accounts for every possible way of distributing multiplicities among the selected eigenvalues.

Finally, there are \(\binom{n_d}{K}\) ways to choose the \(K\)-subset of eigenvalues from the total of \(n_d\).  
Multiplying these contributions gives
\[
N(n_d,K,m)=\binom{n_d}{K}\;\sum_{\substack{\alpha_1+\cdots+\alpha_K=m \\ \alpha_k\ge 1}}
\prod_{k=1}^K p(\alpha_k),
\]
which completes the proof.
\end{proof}

Following example is used to demonstrate Theorem~\ref{thm:Counting Jordan Block Structures for Multiple Distinct Eigenvalues}.

\begin{example}
Take \(n_d=5\), \(m=6\), \(K=2\).  
Partition numbers: \(p(1)=1,\;p(2)=2,\;p(3)=3,\;p(4)=5,\;p(5)=7,\;p(6)=11\).

The compositions of \(6\) into \(2\) positive parts (ordered) are
\((1,5),(2,4),(3,3),(4,2),(5,1)\).

For each composition \((\alpha_1,\alpha_2)\), the contribution is \(p(\alpha_1)p(\alpha_2)\).  
Thus the inner sum is
\[
1\cdot 7 + 2\cdot 5 + 3\cdot 3 + 5\cdot 2 + 7\cdot 1
=7+10+9+10+7=43.
\]
Multiplying by \(\binom{n_d}{K}=\binom{5}{2}=10\), we get
\[
N(5,2,6)=10\cdot 43=430.
\]

Hence, there are 430 distinct Jordan-block forms (up to permutation of blocks) obtained by choosing any 2 distinct eigenvalues out of 5 and distributing total algebraic multiplicity \(6\) among them, with all possible Jordan partitions for each eigenvalue.
\end{example}

\subsubsection{Dense Coefficients}\label{sec':D_Multiple Distinct Eigenvalue_N}

For the function $f$ with dense coefficinets, we can follow our approch given by Section~\ref{sec':S_Multiple Distinct Eigenvalue_N} to determine the existence of the matrix $\bm{X}$. The counting part is same in Section~\ref{sec:Counting Jordan Block Structures for Multiple Distinct Eigenvalues sparse}.

However, we have to compare sparse vs.\ dense polynomials in counting $n_d$. The algebraic identities for the number of distinct roots,
\[
n_d \;=\; n - \deg(\gcd(f,f')), 
\qquad
n_d \;=\; \sum_{k\ge1}\deg(g_k),
\]
hold equally for both sparse and dense polynomials.  
However, there are important differences when one considers representation and algorithms:

\begin{itemize}
  \item \textbf{Dense representation:}  
The polynomial is determined by its coefficient list \((a_0,\dots,a_n)\). Therefore, the computations of \(\gcd(f,f')\), the discriminant, or square--free factorization are polynomial-time in the degree \(n\). In this situation, sparsity is irrelevant once all coefficients are given.

  \item \textbf{Sparse representation:}  
  The polynomial is specified by its nonzero terms only, e.g.\ 
  \[
  f(x)=z^{10^{12}}-1,
  \]
  which has just two terms but enormous degree.  
  Algorithms for $\gcd$, factorization, or root-counting behave very differently:  
  complexity is typically polynomial in the number of nonzero terms but may grow exponentially in $\log n$ (because the exponents can be extremely large).  
  Thus, detecting multiplicities and computing $n_d$ in sparse arithmetic often requires specialized algorithms (using modular methods, resultants, or Newton polygons).

  \item \textbf{Conceptual distinction:}  
  Sparse polynomials highlight that few terms do \emph{not} imply few distinct roots (e.g.\ $z^n-1$). In the dense case, the number of coefficients grows linearly with $n$, so degree $n$ already reflects the data size, and standard algebraic algorithms apply directly.
\end{itemize}

\section{Diagonalizable Constraints}\label{sec:Diagonalizable Constraints}

In this section, we will consider Problem~\ref{prob:diagonalizability} under the case of one distinct eigenvalue in Section~\ref{sec':One Distinct Eigenvalue_D} and multiple distinct eigenvalues in Section~\ref{sec':Multiple Distinct Eigenvalues_D}.

\subsection{One Distinct Eigenvalue}\label{sec':One Distinct Eigenvalue_D}

From Eq.~\eqref{eq1:S_One Distinct Eigenvalue_N}, we have
\begin{eqnarray}\label{eq1:sec':One Distinct Eigenvalue_D}
f(\bm{X})&=& \bm{U}\left(\bigoplus\limits_{i=1}^{\alpha_{1}^{(\mathrm{G})}}f(\bm{J}_{m_{1,i}}(\lambda_{1}))\right)\bm{U}^{-1}.
\end{eqnarray}
In order for $f(\bm{X})$ to be diagonalizable and non-nilpotent, the following conditions must hold:
\begin{eqnarray}\label{eq2:sec':One Distinct Eigenvalue_D}
f(x)\neq 0, \quad f'(x)=f''(x)=\ldots=f^{(\acute{m}-1)}(x)=0, 
\end{eqnarray}
where $\acute{m} = \max\limits_{i} m_{1,i}$.

We will consider the polynomial $f$ with cases of sparse coefficients and dense coefficients. 

\subsubsection{Sparse Coefficients}

Let \( f(x) \) be a sparse polynomial with complex coefficients. We are interested in the following problem:  

\begin{quote}
\emph{Find conditions and counts for the distinct points \( x \) such that}
\begin{eqnarray}\label{eq:S f diag conditions}
f(x) \neq 0 
\quad \text{but} \quad 
f'(x) = f''(x) = \cdots = f^{(\acute{m}-1)}(x) = 0.
\end{eqnarray}
\end{quote}

This means \( x \) is a common root of the first \( \acute{m}-1 \) derivatives of \( f \), but not a root of \( f \) itself.  

\paragraph{Existence Conditions}

Define
\begin{eqnarray}\label{eq1:g x def for existence cond}
g(x) = \gcd\big(f'(x), f''(x), \ldots, f^{(\acute{m}-1)}(x)\big).
\end{eqnarray}
Then the above condition holds if and only if:
\[
\exists \, x_0 \in \mathbb{C} \quad \text{such that} \quad g(x_0)=0, \quad f(x_0)\neq 0.
\]
Equivalently, if \(\gcd(f, g)\) is a proper divisor of \(g\). Then, we have the following Theorem~\ref{thm:S Existence Criterion}.

\begin{theorem}[Existence Criterion]\label{thm:S Existence Criterion}
Let $f \in \mathbb{C}[x]$, $\acute{m} \ge 2$, and define $g(x)$ as in Eq.~\eqref{eq1:g x def for existence cond}. There exists a point $x_0 \in \mathbb{C}$ with
\[
f(x_0) \neq 0, \quad f'(x_0) = f''(x_0) = \dots = f^{(\acute{m}-1)}(x_0) = 0
\]
if and only if $g(x)$ is nonconstant and $g$ has at least one root not shared by $f$ (i.e., $\gcd(f, g)$ is a proper divisor of $g$).
\end{theorem}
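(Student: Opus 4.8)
The plan is to reduce the two-sided condition on $f$ and its derivatives to a purely set-theoretic statement about the roots of $g$, and then to track multiplicities in order to recover the stated gcd formulation. First I would observe that over $\mathbb{C}$ the common zero set of $f', f'', \dots, f^{(\acute{m}-1)}$ coincides exactly with the zero set of their gcd $g$: since $g$ divides each $f^{(j)}$, every root of $g$ is a common root, while by the B\'ezout identity $g = \sum_{j} a_j f^{(j)}$ for suitable $a_j \in \mathbb{C}[x]$, so every common root of the $f^{(j)}$ is a root of $g$. Writing $Z(p)$ for the root set of a polynomial $p$, the condition $f'(x_0)=\cdots=f^{(\acute{m}-1)}(x_0)=0$ is therefore equivalent to $x_0 \in Z(g)$, and the sought point $x_0$ exists if and only if $Z(g)\not\subseteq Z(f)$.

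With this reduction the main biconditional is immediate. For the reverse direction, if $g$ is nonconstant then $Z(g)\neq\varnothing$ by the fundamental theorem of algebra, and if moreover $g$ has a root $x_0$ outside $Z(f)$, then $x_0$ is a common root of all the derivatives with $f(x_0)\neq0$, as required. For the forward direction, a witness $x_0$ lies in $Z(g)$ (so $g$ has a root and is nonconstant) and lies outside $Z(f)$, giving $Z(g)\not\subseteq Z(f)$.

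The step requiring the most care is the identification of $Z(g)\not\subseteq Z(f)$ with the assertion that $\gcd(f,g)$ is a \emph{proper} divisor of $g$, since in general a polynomial can share all of its roots with another and still properly divide it on account of multiplicities. Here the special origin of $g$ removes this ambiguity, and I would exploit it as follows. Suppose $\beta\in Z(g)\cap Z(f)$ and set $r=\operatorname{mult}_f(\beta)$. Since $\beta$ is a root of $f^{(\acute{m}-1)}$ while $f^{(j)}(\beta)=0$ holds exactly for $j\le r-1$, we must have $r\ge\acute{m}$; moreover $\operatorname{mult}_{f^{(j)}}(\beta)=r-j$ for $1\le j\le\acute{m}-1$, so $\operatorname{mult}_g(\beta)=\min_{1\le j\le\acute{m}-1}(r-j)=r-\acute{m}+1$. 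Because $\acute{m}\ge2$, this yields $\operatorname{mult}_g(\beta)<r=\operatorname{mult}_f(\beta)$, so every shared root enters $\gcd(f,g)$ with its \emph{full} multiplicity in $g$. Consequently $\gcd(f,g)=\prod_{\beta\in Z(g)\cap Z(f)}(x-\beta)^{\operatorname{mult}_g(\beta)}$, whose degree equals $\deg g$ precisely when $Z(g)\subseteq Z(f)$. Hence $\gcd(f,g)$ is a proper divisor of $g$ if and only if $Z(g)\not\subseteq Z(f)$, which closes the loop with the reduction above and proves the criterion.

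I expect this multiplicity bookkeeping—in particular the forced bound $r\ge\acute{m}$ and the exact value $\operatorname{mult}_g(\beta)=r-\acute{m}+1$ for shared roots—to be the one genuinely nontrivial ingredient, as it is what guarantees that sharing a root forces a strict drop in $g$-multiplicity relative to $f$; the remaining implications are formal consequences of the correspondence between roots of a gcd and common roots of its arguments.
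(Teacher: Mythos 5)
Your proof is correct, and at the top level it follows the same reduction as the paper's own proof: identify the common zero set of $f',\dots,f^{(\acute{m}-1)}$ with the zero set of $g$ (one inclusion from $g$ dividing each $f^{(j)}$, the other from the B\'ezout representation of the gcd), then note that the desired $x_0$ exists iff $g$ has a root outside the zero set of $f$. Where you genuinely diverge is in connecting that set-theoretic condition to the gcd condition. The paper's proof of the ($\Leftarrow$) direction simply asserts that if $d=\gcd(f,g)$ is a proper divisor of $g$ then some root of $g$ is not a root of $d$; as you correctly observe, this implication is false for general divisors (e.g., $x-1$ is a proper divisor of $(x-1)^2$ yet has the same root set), so the paper's argument has a real gap at precisely this point. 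Your multiplicity bookkeeping is the ingredient that closes it: a shared root $\beta$ of $f$ and $g$ forces $r=\operatorname{mult}_f(\beta)\ge\acute{m}$, whence $\operatorname{mult}_g(\beta)=r-\acute{m}+1<r$, so $\beta$ enters $\gcd(f,g)$ with its \emph{full} $g$-multiplicity; consequently $\gcd(f,g)$ can fall short of $g$ only through roots of $g$ that $f$ avoids, and properness of the divisor is genuinely equivalent to $Z(g)\not\subseteq Z(f)$ for this particular $g$. In short, your argument is not only valid but strictly more careful than the paper's: the lemma you flag as the nontrivial step is exactly what is needed to justify the ``i.e.''\ in the theorem statement and the paper's ($\Leftarrow$) direction, both of which the paper treats as automatic.
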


\begin{proof}
Let $d(x) = \gcd(f(x), g(x))$.

($\Rightarrow$) Suppose there exists $x_0 \in \mathbb{C}$ with $f(x_0) \neq 0$ and $f'(x_0) = \dots = f^{(\acute{m}-1)}(x_0) = 0$. 

Since $x_0$ is a common root of $f', \dots, f^{(\acute{m}-1)}$, we have $g(x_0) = 0$. Thus $g$ is nonconstant. 

Also, $f(x_0) \neq 0$ implies $d(x_0) \neq 0$, so $d$ is a proper divisor of $g$.

($\Leftarrow$) Suppose $g$ is nonconstant and $d$ is a proper divisor of $g$. 

Then there exists $x_0 \in \mathbb{C}$ such that $g(x_0) = 0$ but $d(x_0) \neq 0$, hence $f(x_0) \neq 0$.

Since $g(x_0) = 0$, $x_0$ is a common root of $f', \dots, f^{(\acute{m}-1)}$, so $f'(x_0) = \dots = f^{(\acute{m}-1)}(x_0) = 0$.

Thus $x_0$ satisfies the required conditions.
\end{proof}

\paragraph{Counting the Number of Distinct Roots}

Theorem~\ref{thm:S Counting distinct roots} below is given to count the number of distinct roots satisfying Eq.~\eqref{eq:S f diag conditions}. 

\begin{theorem}[Counting Roots]\label{thm:S Counting distinct roots}
The number of distinct roots \( x \) with
\[
f(x)\neq 0, 
\quad f'(x)=f''(x)=\cdots=f^{(\acute{m}-1)}(x)=0
\]
is exactly the number of distinct roots of \( h(x) \), where 
\[
h(x) = \frac{g(x)}{\gcd(f(x), g(x))}.
\]
\end{theorem}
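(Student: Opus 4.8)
The plan is to argue entirely at the level of root multiplicities over $\mathbb{C}$, exploiting that $\mathbb{C}$ is algebraically closed and that the $\gcd$ of a family of polynomials vanishes precisely at their common zeros. First I would record the set-theoretic reformulation: since the roots of $g(x)=\gcd(f',f'',\ldots,f^{(\acute{m}-1)})$ are exactly the common zeros of $f',\ldots,f^{(\acute{m}-1)}$, the points $x$ with $f'(x)=\cdots=f^{(\acute{m}-1)}(x)=0$ are exactly the roots of $g$. Hence the set to be counted is $S=\{\alpha\in\mathbb{C}: g(\alpha)=0,\ f(\alpha)\neq 0\}$, and it suffices to prove that $S$ coincides with the set of distinct roots of $h=g/\gcd(f,g)$.

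Write $d(x)=\gcd(f,g)$ and let $\mu_\alpha(\cdot)$ denote multiplicity of $\alpha$ as a root. For every $\alpha$ one has $\mu_\alpha(d)=\min\!\big(\mu_\alpha(f),\mu_\alpha(g)\big)$, so that $\mu_\alpha(h)=\mu_\alpha(g)-\min\!\big(\mu_\alpha(f),\mu_\alpha(g)\big)$. I would then split into cases. If $f(\alpha)\neq 0$, so $\mu_\alpha(f)=0$, then $\mu_\alpha(h)=\mu_\alpha(g)$, whence $\alpha$ is a root of $h$ if and only if it is a root of $g$; these contribute exactly the elements of $S$. The substance of the theorem is therefore that the remaining roots of $g$, namely those shared with $f$, cancel completely in $h$.

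The key step, and the one that uses the specific structure of $g$, is the multiplicity inequality at a common root. Because $\acute{m}\ge 2$, the first derivative $f'$ appears among the polynomials whose $\gcd$ defines $g$, so $g\mid f'$ and hence $\mu_\alpha(g)\le\mu_\alpha(f')$ for every $\alpha$. If $\alpha$ is a root of $f$ of multiplicity $\mu_\alpha(f)=\mu\ge 1$, then $\mu_\alpha(f')=\mu-1$, so $\mu_\alpha(g)\le \mu-1<\mu=\mu_\alpha(f)$. Consequently $\min\!\big(\mu_\alpha(f),\mu_\alpha(g)\big)=\mu_\alpha(g)$ and $\mu_\alpha(h)=0$, i.e.\ every common root of $f$ and $g$ disappears from $h$. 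Combining the two cases shows the roots of $h$ are precisely the elements of $S$, and counting distinct roots on each side completes the argument.

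I expect the main obstacle to be exactly this last inequality. A naive reading suggests that forming $g/\gcd(f,g)$ need only strip away roots shared with $f$ set-theoretically, which is false in general: a common root with $\mu_\alpha(g)>\mu_\alpha(f)$ would persist in $h$ even though $f(\alpha)=0$. The statement is rescued only by the fact that $g$ divides $f'$, which forces $\mu_\alpha(g)<\mu_\alpha(f)$ at every shared root; making this dependence explicit is the crux, and I would isolate it as the pivotal lemma of the proof.
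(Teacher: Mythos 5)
Your proof is correct, and on the decisive step it takes a genuinely different --- and tighter --- route than the paper's own argument. Both proofs reduce the theorem to the same claim, namely that the roots of $h$ are exactly the roots of $g$ that are not roots of $f$ (and both use, as you do, that over $\mathbb{C}$ the zeros of $g=\gcd(f',\dots,f^{(\acute{m}-1)})$ are precisely the common zeros of those derivatives). The paper then argues qualitatively: for the direction ``a root of $h$ cannot be a root of $f$,'' it says $f(x_0)=0$ would force $d(x_0)=0$, so the factor $(x-x_0)$ would appear in both numerator and denominator of $h$, ``contradicting that $h(x)$ is a polynomial,'' and it handles the remaining direction via a Taylor-expansion argument around $x_0$. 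Your proof instead does exact multiplicity bookkeeping, $\mu_\alpha(h)=\mu_\alpha(g)-\min\big(\mu_\alpha(f),\mu_\alpha(g)\big)$, and isolates the pivotal lemma: since $\acute{m}\ge 2$, the polynomial $g$ divides $f'$, hence $\mu_\alpha(g)\le\mu_\alpha(f)-1$ at every common root of $f$ and $g$, so the cancellation in $h$ is complete. This buys genuine rigor where the paper is shaky: the paper's cancellation step is, as written, a non sequitur (a common factor in numerator and denominator does not contradict $h$ being a polynomial --- $h$ is a polynomial by construction because $d\mid g$), and without your inequality a hypothetical common root with $\mu_\alpha(g)>\mu_\alpha(f)$ would survive in $h$ and break the count. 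Your proposal thus not only proves the theorem but pinpoints and closes the precise gap the paper glosses over, and it renders the paper's Taylor-expansion detour (including its inessential claim that $g$ vanishes at $x_0$ to order $\acute{m}-1$) unnecessary.
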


\begin{proof}

Let \( d(x) = \gcd(f(x), g(x)) \), so that \( h(x) = \frac{g(x)}{d(x)} \). We proceed by analyzing the roots of each polynomial. We first claim that : A complex number \( x_0 \) is a root of \( h(x) \) if and only if \( x_0 \) is a root of \( g(x) \) but not a root of \( f(x) \).

\textbf{Proof of claim}\\
(only if part) If \( h(x_0) = 0 \), then since \( h(x) = \frac{g(x)}{d(x)} \) and \( d(x) \) divides \( g(x) \), we must have \( g(x_0) = 0 \). If \( f(x_0) = 0 \).  Since \( d(x) = \gcd(f(x), g(x)) \), we would have \( d(x_0) = 0 \). But this would imply that the factor \( (z - x_0) \) appears in both numerator and denominator of \( h(x) \), contradicting that \( h(x) \) is a polynomial. 

(if part) If \( g(x_0) = 0 \) but \( f(x_0) \neq 0 \), then the multiplicity of \( x_0 \) in \( d(x) \) is strictly less than its multiplicity in \( g(x) \) (since \( d(x) \) cannot have higher multiplicity than what appears in \( f(x) \), and \( f(x_0) \neq 0 \) means \( x_0 \) does not appear in \( f(x) \)). Therefore, \( h(x_0) = 0 \). Therefore, this claim is proved.  

Now, let \( x_0 \) be a root satisfying the conditions:
\[
f(x_0) \neq 0, \quad f'(x_0) = f''(x_0) = \cdots = f^{(\acute{m}-1)}(x_0) = 0.
\]

By the claim, it suffices to show that such \( x_0 \) are exactly the roots of \( g(x) \) that are not roots of \( f(x) \).

Consider the Taylor expansion of \( f(x) \) around \( x_0 \):
\[
f(x) = f(x_0) + f'(x_0)(z - x_0) + \frac{f''(x_0)}{2!}(x - x_0)^2 + \cdots + \frac{f^{(n)}(x_0)}{n!}(x - x_0)^n.
\]

Since \( f'(x_0) = f''(x_0) = \cdots = f^{(\acute{m}-1)}(x_0) = 0 \), the expansion becomes:
\[
f(x) = f(x_0) + \frac{f^{(\acute{m})}(x_0)}{\acute{m}!}(z - x_0)^{\acute{m}} + \cdots.
\]

This means that \( f(x) - f(x_0) \) is divisible by \( (x - x_0)^{\acute{m}} \). Therefore, \( f(x) \) and \( f(x) - f(x_0) \) share a common factor of \( (x - x_0)^{\acute{m}} \). Since \( f(x_0) \neq 0 \), the polynomials \( f(x) \) and the constant polynomial \( f(x_0) \) are coprime, so by properties of resultants or the Euclidean algorithm, \( g(x) \) must vanish at \( x_0 \) with multiplicity at least \( \acute{m}-1 \). In particular, \( g(x_0) = 0 \).

Conversely, if \( g(x_0) = 0 \) but \( f(x_0) \neq 0 \), then \( x_0 \) is a multiple root of the equation \( f(x) = c \) for some constant \( c \neq 0 \), which implies that the first \( \acute{m}-1 \) derivatives vanish at \( x_0 \) for some \( \acute{m} \geq 2 \).

Therefore, the roots satisfying the given conditions are precisely the roots of \( g(x) \) that are not roots of \( f(x) \), which by the claim are exactly the roots of \( h(x) \). The distinctness is preserved since \( h(x) \) is constructed to have the same distinct roots as this set.
\end{proof}

Therefore, Algorithm~\ref{alg:distinct_roots One Distinct Eigenvalue_D} is proposed to count distinct roots with derivative multiplicity conditions is given below. 

\begin{algorithm}[htbp]
\removelatexerror
\caption{Algorithm for Counting Distinct Roots}\label{alg:distinct_roots One Distinct Eigenvalue_D}
\KwIn{Polynomial $f(x)$ of degree at least $m$}
\KwOut{Number of distinct roots of $f(x)$}

$g(x) \leftarrow \gcd(f'(x), f''(x), \ldots, f^{(\acute{m}-1)}(x))$ \;

$d(x) \leftarrow \gcd(f(x), g(x))$ \;

$h(x) \leftarrow g(x) / d(x)$ \;

count $\leftarrow$ number\_of\_distinct\_roots($h(x)$) \;

\Return count \;
\end{algorithm}

Below, we will present some thoeretical bounds for the number of distinct roots of $h(x)$. 

\paragraph{Theoretical Bounds for Sparse Polynomials}

We begin by introducing some notation and recalling a useful lemma that will allow us to establish an upper bound on the number of distinct roots of a sparse polynomial.  

For a real polynomial \(p(x)\) of degree \(n\) we write
\[
p(x)=\sum_{j=0}^n a_j x^j,\qquad a_n\neq 0.
\]
For a real number \(t\) denote by
\[
\mathcal{S}_p(t)=\big(p(t),\, p'(t),\, p''(t),\,\dots,\, p^{(n)}(t)\big)
\]
the finite sequence of values of \(p\) and its derivatives at \(t\).  Let \(V\big(\mathcal{S}_p(t)\big)\) be the number of sign changes in this sequence after discarding any zero entries (consecutive equal-sign zeros are ignored in the usual way).  For an interval \((a,b]\) we write \(N_{(a,b]}(p)\) for the number of distinct real roots of \(p\) lying in \((a,b]\), counted without multiplicity.

\begin{lemma}[Budan–Fourier inequality]\label{lem:budan-fourier}
Let \(p\) be a real polynomial of degree \(n\).  
Let \(\mathcal{S}_p(t) = (p(t), p'(t), \dots, p^{(n)}(t))\) and let \(V(\mathcal{S}_p(t))\) be the number of sign variations in this sequence (zeros ignored).  
For any real numbers \(a < b\), the number \(N_{(a,b)}(p)\) of distinct real roots of \(p\) in \((a,b)\) satisfies
\[
N_{(a,b)}(p) \le V(\mathcal{S}_p(a)) - V(\mathcal{S}_p(b)),
\]
and the difference is an even nonnegative integer.
\end{lemma}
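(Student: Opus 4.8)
The plan is to prove the Budan–Fourier inequality by a continuity-and-variation argument, tracking how the function $V(\mathcal{S}_p(t))$ changes as $t$ increases from $a$ to $b$. The key observation is that the sign-variation count $V(\mathcal{S}_p(t))$ is a piecewise-constant integer-valued function of $t$ that can only change when $t$ passes through a value where some entry $p^{(k)}(t)$ of the Derivative sequence vanishes. So the strategy is: first establish that away from such exceptional points $V$ is locally constant, and then analyze precisely how much $V$ can jump (and in which direction) as $t$ crosses each exceptional point.

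First I would enumerate the exceptional points: these are the (finitely many) real zeros of $p, p', \dots, p^{(n)}$ combined. Between consecutive exceptional points no entry of $\mathcal{S}_p(t)$ changes sign, so $V(\mathcal{S}_p(t))$ is constant on each open subinterval. Next, I would analyze the crossing behaviour at a single exceptional point $c$. The central local fact is the following: when $t$ increases through a point $c$ where $p^{(k)}(c)=0$ for some $k$, the contribution of the pair $(p^{(k-1)}, p^{(k)})$ and $(p^{(k)}, p^{(k+1)})$ to the variation count can only stay the same or decrease, and moreover a genuine root of $p$ itself (the top of the sequence, $k=0$) forces a decrease of exactly one in the variation count. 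The clean way to package this is Rolle's theorem together with a careful case analysis of the sign pattern of the nonzero neighbours $p^{(k-1)}(c), p^{(k+1)}(c)$: one shows that the local drop in $V$ across $c$ is a nonnegative integer, and that it is at least the multiplicity of $c$ as a root of $p$ when $p(c)=0$.

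From these local facts the global inequality follows by summing: since $V(\mathcal{S}_p(t))$ only ever decreases (or stays constant) as $t$ increases, the total decrease $V(\mathcal{S}_p(a))-V(\mathcal{S}_p(b))$ is nonnegative and is an upper bound for the number of real roots of $p$ in $(a,b)$, counted with multiplicity — hence a fortiori an upper bound for $N_{(a,b)}(p)$ counted without multiplicity. For the parity statement I would argue that each sign variation lost at a non-root exceptional point (a zero of some proper derivative but not of $p$) is lost in pairs: passing through such a point either leaves $V$ unchanged or decreases it by an even amount, because the entry returns to a compatible sign pattern on the far side. Combining the root-induced drops with these even non-root drops shows that $V(\mathcal{S}_p(a))-V(\mathcal{S}_p(b))$ and the true root count differ by an even nonnegative integer.

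The main obstacle I anticipate is the local crossing analysis at an exceptional point $c$ where an \emph{intermediate} derivative $p^{(k)}$ vanishes while $p(c)\neq 0$ — establishing rigorously that such a vanishing can only remove sign variations in pairs (never adding a variation, and never removing an odd number) requires a delicate case-by-case treatment of the possible sign configurations of $p^{(k-1)}(c^-), p^{(k)}(t), p^{(k+1)}(c^+)$ near $c$, including the subtlety of several consecutive derivatives vanishing simultaneously at the same $c$. Handling that degenerate cluster — where $p^{(k)}(c)=p^{(k+1)}(c)=\cdots=p^{(k+r)}(c)=0$ — and showing the block contributes an even drop is the technical heart of the argument; the remaining steps are bookkeeping once this local lemma is secured.
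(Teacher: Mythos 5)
Your proposal follows essentially the same route as the paper's proof: both treat $V(\mathcal{S}_p(t))$ as a piecewise-constant function that changes only at zeros of some $p^{(k)}$, show that crossing a root of $p$ of multiplicity $m$ drops $V$ by $m$ plus an even nonnegative amount, show that zeros of intermediate derivatives (where $p\neq 0$) drop $V$ by an even amount, and then sum the local drops. You also correctly flag the same technical heart that the paper itself treats only sketchily, namely the case analysis at clusters of simultaneously vanishing intermediate derivatives.
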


\begin{proof}
We proceed in several steps.

\noindent \textbf{Step 1: Reduction to the case where \(p\) has no roots at \(a\) or \(b\).} \\
If \(p\) or any derivative vanishes at \(a\) or \(b\), we can perturb \(a\) slightly to the right and \(b\) slightly to the left without changing \(N_{(a,b)}(p)\) and only possibly decreasing \(V(\mathcal{S}_p(a)) - V(\mathcal{S}_p(b))\) (since zeros become nonzeros with definite signs under small perturbation). Thus it suffices to prove the inequality when no term in \(\mathcal{S}_p(a)\) or \(\mathcal{S}_p(b)\) is zero.

\noindent \textbf{Step 2: Effect of passing through a root of \(p\).} \\
Suppose \(p\) has a root of multiplicity \(m\) at \(r \in (a,b)\). Write
\[
p(x) = (x-r)^m q(x), \quad q(r) \neq 0.
\]
By Taylor expansion or repeated differentiation, the sequence of derivatives at \(r^-\) and \(r^+\) behaves as follows:

For \(x\) just left of \(r\), the sign pattern of 
\[
(p(x), p'(x), \dots, p^{(m)}(x))
\]
is determined by the sign of \(q(r)\) and the factor \((x-r)^{m-k}\) for \(p^{(k)}(x)\). The sign of \((x-r)^{m-k}\) is \((-1)^{m-k}\) for \(x < r\) and positive for \(x > r\).

This causes exactly \(m\) sign changes in the sequence \((p, p', \dots, p^{(m)})\) across \(r\) (from left to right), regardless of the sign of \(q(r)\). More precisely, \(V(\mathcal{S}_p(r^-)) - V(\mathcal{S}_p(r^+)) = m + 2\ell\) for some integer \(\ell \ge 0\), but actually one can show it's exactly \(m\) when \(q(r) > 0\) and exactly \(m\) or \(m \pm 1\) depending on parity in general, but always the drop is \(\ge m\) and has the same parity as \(m\).

Since \(m \ge 1\) for a root, and we count distinct roots, each distinct root contributes at least 1 to the drop in \(V\).

\noindent \textbf{Step 3: Effect of non-root points on \(V\).} \\
Between roots of \(p\), the sequence \(\mathcal{S}_p(t)\) can change only when some derivative other than \(p\) vanishes. Suppose at some \(c \in (a,b)\), \(p(c) \neq 0\) but \(p^{(k)}(c) = 0\) for some \(k \ge 1\). One can check that the number of sign variations in \(\mathcal{S}_p(t)\) can change by an even number at such points (by a local analytic argument). Thus, such events do not change the parity of \(V\) and the total drop in \(V\) from \(a\) to \(b\) equals the number of roots (counting multiplicity) plus an even nonnegative number.

\noindent \textbf{Step 4: Conclusion.} \\
Let \(a = t_0 < t_1 < \dots < t_k = b\) be points including all roots of \(p\) in \((a,b)\) and all points where any derivative vanishes (finitely many). On each \((t_{i-1}, t_i)\), \(V(\mathcal{S}_p(t))\) is constant. At each \(t_i\) that is a root of \(p\) of multiplicity \(m_i\), \(V\) drops by \(m_i + 2e_i\) for some integer \(e_i \ge 0\). At other \(t_i\), \(V\) drops by an even number \(\ge 0\).

Thus,
\[
V(a) - V(b) = \sum_{\text{roots } r_i} m_i + 2E
\]
for some integer \(E \ge 0\). Since \(N_{(a,b)}(p) \le \sum_{\text{roots } r_i} m_i\) (actually \(N\) = number of distinct roots, so \(N \le \sum m_i\)), we get
\[
N_{(a,b)}(p) \le V(a) - V(b),
\]
and \(V(a) - V(b) - N_{(a,b)}(p)\) is even and nonnegative.
\end{proof}

Corollary~\ref{cor:descartes} below is given to bound the number of \emph{distinct} positive real roots by no-zero coefficient terms. 

\begin{corollary}[Descartes' rule of signs]
\label{cor:descartes}
Let
\[
p(x) = a_0 + a_1x + \dots + a_nx^n \in \mathbb{R}[x],
\]
and suppose exactly \( t \) of the coefficients \( a_j \) are nonzero. Let \( V_{\mathrm{coeff}}(p) \) be the number of sign changes in the ordered coefficient sequence \( (a_0, a_1, \dots, a_n) \) after removing zeros. Then:

\begin{enumerate}
    \item The number of positive real roots of \( p \), counted with multiplicity, is either equal to \( V_{\mathrm{coeff}}(p) \) or less than it by an even number.
    \item In particular, the number of distinct positive real roots satisfies
    \[
    \#\{x>0 : p(x)=0\} \le V_{\mathrm{coeff}}(p) \le t-1.
    \]
\end{enumerate}
\end{corollary}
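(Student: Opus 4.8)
The plan is to obtain both assertions as a specialization of the Budan--Fourier inequality (Lemma~\ref{lem:budan-fourier}) to the half-line $(0,+\infty)$, supplemented by an elementary sign-change count. The only substantive work is to evaluate the variation function $V(\mathcal{S}_p(t))$ at the two endpoints $t=0$ and $t\to+\infty$; once these are known, the corollary is immediate.

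First I would compute the variation at the origin. Differentiating $p(x)=\sum_{j=0}^n a_j x^j$ term by term gives $p^{(k)}(x)=\sum_{j\ge k}\tfrac{j!}{(j-k)!}\,a_j x^{j-k}$, so that $p^{(k)}(0)=k!\,a_k$. Since $k!>0$, the $k$-th entry of $\mathcal{S}_p(0)$ has the same sign as $a_k$ and vanishes precisely when $a_k=0$; after discarding zeros, the sequences $\mathcal{S}_p(0)$ and $(a_0,\dots,a_n)$ therefore exhibit identical sign patterns, whence $V(\mathcal{S}_p(0))=V_{\mathrm{coeff}}(p)$. Next I would examine the far end: the leading term of $p^{(k)}$ is $\tfrac{n!}{(n-k)!}a_n x^{n-k}$ with positive coefficient, so for every sufficiently large $t>0$ each $p^{(k)}(t)$ carries the sign of $a_n$. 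The sequence $\mathcal{S}_p(t)$ is then sign-constant and $V(\mathcal{S}_p(t))=0$; concretely I choose $b$ larger than the greatest real root of $p$ and of all its derivatives.

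With these two values in hand I would apply Lemma~\ref{lem:budan-fourier} on $(0,b)$. As the proof of that lemma records, $V(\mathcal{S}_p(0))-V(\mathcal{S}_p(b))$ equals the number of roots of $p$ in $(0,b)$ counted with multiplicity plus an even nonnegative integer; substituting the two computed values yields that the number of positive real roots counted with multiplicity equals $V_{\mathrm{coeff}}(p)$ or falls short of it by an even number, which is part~(1). Part~(2) follows at once: the number of \emph{distinct} positive roots never exceeds the number counted with multiplicity, so $\#\{x>0:p(x)=0\}\le V_{\mathrm{coeff}}(p)$, and the bound $V_{\mathrm{coeff}}(p)\le t-1$ is the trivial remark that a sequence of $t$ nonzero reals has only $t-1$ adjacent pairs, each contributing at most one sign change.

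The main obstacle I anticipate is purely technical and concentrated at the endpoint $t=0$: when $a_0=0$ the origin is itself a root of $p$, so the hypothesis of Lemma~\ref{lem:budan-fourier} that the endpoint be a non-root is violated, and moreover the entries of $\mathcal{S}_p(0)$ that vanish because some interior $a_k=0$ must be assigned definite signs before the variation count is meaningful. I would handle both issues with the perturbation device already used in Step~1 of the lemma's proof, replacing $0$ by a small $\varepsilon>0$ below the least positive root. The point to verify is that $V(\mathcal{S}_p(\varepsilon))$ still equals $V_{\mathrm{coeff}}(p)$: for small $\varepsilon$ the sign of $p^{(k)}(\varepsilon)$ is governed by its lowest-order surviving term, namely the first nonzero coefficient at or above index $k$, so the sign list reproduces the nonzero-coefficient signs with consecutive repetitions only---repetitions that create no new variations---leaving $V$ unchanged. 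This domination argument near the origin, rather than any new idea, is the delicate part of the proof.
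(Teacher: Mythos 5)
Your proposal is correct and follows essentially the same route as the paper's own proof: specialize the Budan--Fourier inequality (Lemma~\ref{lem:budan-fourier}) to the half-line, identify $V(\mathcal{S}_p(0^+))=V_{\mathrm{coeff}}(p)$ via $p^{(k)}(0)=k!\,a_k$, observe $V(\mathcal{S}_p(+\infty))=0$ from the dominance of the leading terms, and then deduce part~(2) from part~(1) together with the trivial bound of $t-1$ sign changes among $t$ nonzero coefficients. Your endpoint perturbation argument---replacing $0$ by a small $\varepsilon$ and checking that the lowest surviving term of each $p^{(k)}$ only introduces consecutive sign repetitions---is a more careful rendering of what the paper compresses into the notation $0^+$, but it is the same idea, not a different method.
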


\begin{proof}
We prove both parts.

\noindent \textbf{Proof of (1):} \\
Consider the action of \( x = e^t \) substitution. The number of positive roots of \( p(x) \) equals the number of real roots of \( p(e^t) \). Apply Budan--Fourier theorem on the interval \( (0, \infty) \) in the \( x \)-variable, or equivalently, consider the limit of \( V(\mathcal{S}_p(0^+)) - V(\mathcal{S}_p(+\infty)) \).

At \( x \to 0^+ \), the Taylor expansion shows:
\[
p^{(k)}(0^+) = k! \cdot a_k.
\]
Thus the sequence \( \mathcal{S}_p(0^+) \) has signs matching \( (a_0, a_1, \dots, a_n) \) (up to positive factor \( k! \)), so
\[
V(\mathcal{S}_p(0^+)) = V_{\mathrm{coeff}}(p).
\]

As \( x \to +\infty \), \( p(x) \sim a_n x^n \) and \( p^{(k)}(x) \sim \frac{n!}{(n-k)!} a_n x^{n-k} \). For large \( x \), all derivatives have the same sign as \( a_n \), so
\[
V(\mathcal{S}_p(+\infty)) = 0.
\]

By Budan--Fourier theorem, the number of positive roots (counted with multiplicity) is
\[
N_{(0,\infty)}(p)\le V(\mathcal{S}_p(0^+)) - V(\mathcal{S}_p(+\infty)) = V_{\mathrm{coeff}}(p),
\]
and the difference is an even nonnegative integer.

\noindent \textbf{Proof of (2):} \\
The first inequality \( \#\{x>0 : p(x)=0\} \le V_{\mathrm{coeff}}(p) \) follows immediately from part (1), since the number of distinct positive roots is at most the number of roots counted with multiplicity.

For the second inequality \( V_{\mathrm{coeff}}(p) \le t-1 \): \\
If we have \( t \) nonzero coefficients, the maximum number of sign changes occurs if their signs alternate among each consecutive pair, giving at most \( t-1 \) sign changes between consecutive nonzero coefficients. Removing zeros doesn't increase the number of sign changes beyond this alternating maximum.
\end{proof}

Theorem~\ref{thm:khovanskii-univariate} below is given to provide bounds for distinct roots. 
\begin{theorem}[Univariate Fewnomial Complex Roots]\label{thm:khovanskii-univariate}
Let \( p(x) \) be a univariate polynomial with \( t \) nonzero terms. The number of distinct complex roots can be as large as the degree of the polynomial $p(x)$, but the number of distinct positive real roots is at most \( t - 1 \).
\end{theorem}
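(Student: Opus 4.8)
The plan is to handle the two assertions of the statement separately, since they are logically independent and rest on different facts already established in the excerpt. The second claim---that the number of distinct positive real roots is at most $t-1$---is essentially immediate: it is precisely the chain of inequalities in Corollary~\ref{cor:descartes}, namely $\#\{x>0 : p(x)=0\} \le V_{\mathrm{coeff}}(p) \le t-1$, where $V_{\mathrm{coeff}}(p)$ counts sign changes among the $t$ nonzero coefficients. For this half I would simply invoke that corollary and note that the count of \emph{distinct} positive real roots never exceeds the count with multiplicity, which is itself bounded by $t-1$.

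For the first claim---that the number of distinct complex roots can equal $\deg p$ no matter how few terms $p$ has---I would argue by exhibiting an extremal family. The Fundamental Theorem of Algebra already makes $\deg p$ an upper bound on the number of distinct complex roots, so the substance of the assertion is that this bound is \emph{attained} even for extremely sparse polynomials. The natural witness is $p(x)=x^n-1$, which has only $t=2$ nonzero terms yet possesses the $n$ distinct $n$-th roots of unity $e^{2\pi i k/n}$, $k=0,1,\dots,n-1$, as its zeros. To certify distinctness cleanly I would appeal to Theorem~\ref{thm:gcd}: since $p'(x)=nx^{n-1}$ shares no root with $p$, we have $\gcd(p,p')=1$, whence $n_d = n - \deg(\gcd(p,p')) = n$, so all $n$ roots are simple.

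Since neither half introduces new machinery, I do not expect a genuine obstacle; the only point requiring care is the contrast the theorem is meant to dramatize rather than any technical difficulty. I would close by highlighting that the same two-term polynomial $x^n-1$ simultaneously saturates the degree bound for complex roots (with $n$ distinct zeros) while obeying the far sharper fewnomial bound $t-1=1$ on positive real roots---indeed $x^n-1$ has the single positive real root $x=1$. This juxtaposition makes the conceptual distinction flagged in the preceding remark explicit: sparsity constrains the real (and positive) root count severely but imposes nothing on the total complex root count beyond the degree itself.
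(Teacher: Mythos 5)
Your proposal is correct, and on the positive-real-root half it is exactly the paper's argument: both you and the paper simply invoke Corollary~\ref{cor:descartes} to get $\#\{x>0 : p(x)=0\} \le V_{\mathrm{coeff}}(p) \le t-1$. Where you diverge is on the first claim. The paper's own proof dispatches it in one line by citing the Fundamental Theorem of Algebra, which only establishes that $\deg p$ is an \emph{upper bound} on the number of distinct complex roots --- it never actually verifies the ``can be as large as'' assertion, i.e.\ that the bound is attained by arbitrarily sparse polynomials. Your proof fills this in: you exhibit the witness $p(x)=x^n-1$ with $t=2$ terms and certify that its $n$ roots are simple via Theorem~\ref{thm:gcd}, since $\gcd(p,p')=1$ forces $n_d = n - \deg(\gcd(p,p')) = n$. (The paper does contain this computation, but only as a standalone example and remark elsewhere, not as part of the proof of this theorem.) So your route is more faithful to what the theorem literally claims; the paper's proof is shorter but, read strictly, proves only the upper-bound direction of the first sentence. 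Your closing observation that $x^n-1$ simultaneously saturates the complex-root bound while having the single positive real root $x=1$ is a nice touch that sharpens the sparsity-versus-root-count contrast the surrounding remarks are meant to convey.
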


\begin{proof}
The complex root bound follows from the Fundamental Theorem of Algebra (at most degree \( d \) roots). The real root bound is Descartes' Rule of Signs given by Corollary~\ref{cor:descartes}, i.e., a polynomial with \( t \) terms can have at most \( t-1 \) sign changes, hence at most \( t-1 \) positive real roots.
\end{proof}

\begin{remark}[Multiplicity and parity]
The standard Descartes' Rule of Signs establishes an upper bound for the number of \emph{distinct} positive real roots. A more precise formulation states that the difference between the number of sign changes \( V_{\mathrm{coeff}}(p) \) and the number of positive real roots (\emph{counted with multiplicity}) is always an \emph{even nonnegative integer}.

This parity property follows directly from the Budan–Fourier theorem. It can also be understood via a continuity argument: when the coefficients of a polynomial are slightly perturbed to remove multiple roots, the quantity $V_{\mathrm{coeff}}(p) - N_{\mathrm{positive}}$ 
retains its parity. Indeed, each multiple root of multiplicity (m) splits into (m) distinct simple roots under such a perturbation, and the change in sign variations corresponds to the same parity as (m). Thus, the overall parity of the difference remains invariant.
\end{remark}

\begin{remark}[Fewnomials and Khovanskii]
Corollary~\ref{cor:descartes} provides an elementary and \emph{sharp} upper bound for univariate polynomials that depends only on the number of nonzero terms \( t \):
\[
\#\{ x > 0 : p(x) = 0 \} \le t - 1.
\]

This result is a special case of a much broader theory known as \emph{Khovanskii's fewnomial theory}. The latter gives a deep generalization to \emph{systems} of real polynomial equations: it bounds the number of isolated positive real solutions of a system in terms of the total number of \emph{distinct monomials} appearing across all equations. In general, this bound grows roughly exponentially in the number of monomials.

However, when Khovanskii's general bound is specialized to the univariate case (a single polynomial equation), it yields an upper bound that is exponential in \( t \). We note that the elementary bound from Descartes' Rule—namely \( t - 1 \)—is much sharper for univariate polynomials. This illustrates the special strength of Descartes' Rule in the one-dimensional setting.
\end{remark}

Following Example~\ref{exp:thm:khovanskii-univariate} is presented to illustrate Theorem~\ref{thm:khovanskii-univariate}.
\begin{example}\label{exp:thm:khovanskii-univariate}

Let \( f(x) = x^n - 1 \) (two terms).  
\begin{itemize}
  \item For \( \acute{m}=2 \):  
  \( f'(x) = n x^{n-1} \) has the unique root \( x=0 \).  
  Since \( f(0) = -1 \neq 0 \), we obtain exactly one root: \( x=0 \).  

  \item For \( \acute{m} \geq 3 \):  
  Higher derivatives vanish at \( x=0 \) as well, and still \( f(0)\neq 0 \).  
  Hence the unique root remains \( x=0 \).
\end{itemize}

\end{example}

Step 4 in Algorithm~\ref{alg:distinct_roots One Distinct Eigenvalue_D} can be obtained by Theorem~\ref{thm:khovanskii-univariate}. If we set the number of distinct roots of the polynomial $h$ in as $n_d$, the total number of distinct Jordan structures is ${n_d \choose 1}p(m)$, where $p(m)$ is the partition number and $m$ is the matrix $\bm{X}$ dimension.

\subsubsection{Dense Coefficients}

For dense polynomials, there exist several well-established root-counting algorithms for distinct roots.

\paragraph{Euclidean Algorithm for gcd.} 
Let \( h(x) \in \mathbb{C}[x] \) be a nonzero polynomial. To remove multiple roots and obtain a polynomial with the same roots but all simple, one computes
\[
q(x) = \gcd\big(h(x), h'(x)\big).
\]
Then \( h(x)/q(x) \) is the \emph{square-free part} of \( h \), which has the same set of distinct roots as \( h \), but each with multiplicity 1.

More precisely:
\begin{itemize}
    \item If \( h(x) = c \prod_{i=1}^k (x - r_i)^{m_i} \) with \( m_i \geq 1 \) and \( r_i \) distinct, then
    \[
    q(x) = \prod_{i=1}^k (x - r_i)^{m_i - 1}.
    \]
    \item Thus,
    \[
    \frac{h(x)}{q(x)} = c \prod_{i=1}^k (x - r_i),
    \]
    which has exactly the same roots \( r_1, \dots, r_k \), all simple.
\end{itemize}

Therefore, the factorization of \( h(x)/q(x) \) indeed yields the distinct roots of \( h \). Note that the polynomial $h$ is obtained by Step 3 in Algorithm~\ref{alg:distinct_roots One Distinct Eigenvalue_D}.

\paragraph{Sturm Sequence Algorithm.} 
This method is based on polynomial remainder sequences and allows for the exact counting of distinct real roots of \( f \) within any given interval.  

\paragraph{Isolation and Root Refinement.} 
One may use the Bernstein basis or Descartes’ method to isolate root intervals. These intervals can then be refined using Newton iteration or bisection. The overall complexity of this procedure is polynomial in the degree \( n \) and in the size of the coefficients.  

\paragraph{Numerical Polynomial Root Finding.} 
Classical numerical algorithms such as the Jenkins–Traub method, the Aberth method, or the Durand–Kerner method can approximate all roots simultaneously. Distinctness of the roots is then determined by clustering the approximated values~\cite{Stoer2002Introduction}.  

The counting of the number different Jordan structures are same in previous section about sparse coefficients. 

\subsection{Multiple Distinct Eigenvalues}\label{sec':Multiple Distinct Eigenvalues_D}

From Theorem~\ref{thm: Spectral Mapping Theorem for Single Variable}, we have 
\begin{eqnarray}
f(\bm{X})&=& \bm{U}\left(\bigoplus\limits_{k=1}^{K}\bigoplus\limits_{i=1}^{\alpha_{k}^{(\mathrm{G})}}f(\bm{J}_{m_{k,i}}(\lambda_{k}))\right)\bm{U}^{-1}.
\end{eqnarray}
The purpose of this section is to examine the existence of a matrix $\bm{X}$ such that, for a given polynomial $f$, the matrix $f(\bm{X})$ has all off-diagonal entries equal to zero. Furthermore, if such matrices $\bm{X}$ exist, we investigate how many solutions arise in both the sparse coefficients of $f$ and dense coefficients of  $f$.

\subsubsection{Sparse Coefficients}

Suppose the polynomial $h$ is obtained based on Theorem~\ref{thm:S Counting distinct roots}, we have following Corollary~\ref{cor:Practical counting via squarefree part} about the number of distinct roots $x$ satisfying  \(f(x)\neq0\) and \(f',\dots,f^{(\acute{m}-1)}\).

\begin{corollary}[Practical counting via squarefree part]\label{cor:Practical counting via squarefree part}
Let \(h_{\mathrm{sf}}(x)\) denote the squarefree part of \(h(x)\),
\[
h_{\mathrm{sf}}(x)=\frac{h(x)}{\gcd(h(x),h'(x))}.
\]
Then the number of distinct points \(x\) with \(f(x)\neq0\) and \(f',\dots,f^{(\acute{m}-1)}\) vanishing equals \(\deg h_{\mathrm{sf}}\).
\end{corollary}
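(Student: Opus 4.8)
The plan is to obtain this corollary as a direct packaging of two results already in hand: Theorem~\ref{thm:S Counting distinct roots}, which converts the transcendental-looking condition ``$f(x)\neq0$ together with $f'(x)=\cdots=f^{(\acute{m}-1)}(x)=0$'' into the purely algebraic task of counting distinct roots of $h$, and Theorem~\ref{thm:gcd}, which expresses the number of distinct roots of any nonconstant polynomial through its gcd with its own derivative. First I would invoke Theorem~\ref{thm:S Counting distinct roots} verbatim: it already asserts that the number of distinct $x$ satisfying the stated vanishing-and-nonvanishing conditions equals the number of distinct roots of $h(x)$. This reduces the corollary to a single clean claim, namely that the number of distinct complex roots of $h$ equals $\deg h_{\mathrm{sf}}$.

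Second, I would establish that claim by applying the distinct-root count of Theorem~\ref{thm:gcd} to the polynomial $h$ in place of $f$. Writing the factorization $h(x)=c\prod_{j}(x-\beta_j)^{e_j}$ over $\mathbb{C}$ with the $\beta_j$ distinct, the same derivative computation used in the proof of Theorem~\ref{thm:gcd} gives $\gcd(h,h')=c'\prod_{j}(x-\beta_j)^{e_j-1}$, whence $h_{\mathrm{sf}}=h/\gcd(h,h')=c''\prod_{j}(x-\beta_j)$ is squarefree and shares exactly the distinct roots $\beta_j$ of $h$, each now simple. Taking degrees, $\deg h_{\mathrm{sf}}=\#\{\beta_j\}$, so $\deg h_{\mathrm{sf}}$ is precisely the number of distinct roots of $h$. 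Chaining this with the bijection from the previous paragraph yields the desired equality.

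A minor point I would address for completeness is the degenerate case in which no qualifying point exists: by the existence criterion of Theorem~\ref{thm:S Existence Criterion}, this occurs exactly when $h$ is constant, in which case $\gcd(h,h')=h$, the squarefree part $h_{\mathrm{sf}}$ is a nonzero constant, and $\deg h_{\mathrm{sf}}=0$, matching the empty count. I would note that all degree identities above are additive under the factorization $h=h_{\mathrm{sf}}\cdot\gcd(h,h')$, and that working over the algebraically closed field $\mathbb{C}$ is what guarantees the count of distinct roots equals $\deg h_{\mathrm{sf}}$ rather than merely bounding it.

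There is no genuine obstacle here; the entire mathematical content lives in Theorems~\ref{thm:S Counting distinct roots} and~\ref{thm:gcd}, and the corollary simply reorganizes them into the computationally convenient form ``compute $g$, then $h$, then strip $\gcd(h,h')$ and read off the degree.'' The only thing warranting care is bookkeeping: making sure the squarefree stripping is applied to $h$ (not to $f$ or $g$) and that the bijection of Theorem~\ref{thm:S Counting distinct roots} is distinct-root-to-distinct-root, so that no multiplicity is accidentally counted. With those identifications made explicit, the proof is immediate.
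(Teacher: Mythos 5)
Your proof is correct and takes essentially the same approach as the paper: reduce via Theorem~\ref{thm:S Counting distinct roots} to counting the distinct roots of $h$, then note that the squarefree part $h_{\mathrm{sf}}$ has exactly those roots, each simple, so the count is $\deg h_{\mathrm{sf}}$. Your extra details (the explicit factorization argument borrowed from Theorem~\ref{thm:gcd} and the constant-$h$ degenerate case) merely spell out what the paper's two-sentence proof leaves implicit.
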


\begin{proof}
The squarefree part \(h_{\mathrm{sf}}\) has the same distinct roots as \(h\) but with multiplicity \(1\). Hence counting distinct roots of \(h\) is equivalent to computing \(\deg h_{\mathrm{sf}}\).
\end{proof}

Following Theorem~\ref{thm:Sufficient and checkable conditions for at least K distinct points} is given to provide conditions for at least $K$ distinct roots satisfying below:  $f(x)\neq 0$ and $f'(x)=f''(x)=\cdots=f^{(\acute{m}-1)}(x)=0$. 

\begin{theorem}[Sufficient and checkable conditions for at least \(K\) distinct points]\label{thm:Sufficient and checkable conditions for at least K distinct points}
With notation as above, the following are sufficient (and easily checkable) conditions that guarantee \(\#\mathcal{Z}\ge K\), where $\mathcal{Z}$ is the number of distinct roots for the polynomial $f$:
\begin{enumerate}
  \item \(\deg h_{\mathrm{sf}}\ge K\).
  \item Equivalently, \(\deg h \ge K\) and \(\gcd(h,h')\) has degree at most \(\deg h-K\) (so that after removing repeated factors at least \(K\) distinct linear factors remain).
  \item Computationally: compute \(g=\gcd(f',\dots,f^{(\acute{m}-1)})\), \(d=\gcd(f,g)\), form \(h=g/d\), then compute the squarefree part \(h_{\mathrm{sf}}=h/\gcd(h,h')\). If \(\deg h_{\mathrm{sf}}\ge K\) then there are at least \(K\) distinct points in \(\mathcal{Z}\).
\end{enumerate}
\end{theorem}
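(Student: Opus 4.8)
The plan is to reduce all three stated conditions to the single inequality $\deg h_{\mathrm{sf}}\ge K$, since the genuine content has already been assembled in the preceding results. First I would invoke Theorem~\ref{thm:S Counting distinct roots}, which identifies $\#\mathcal{Z}$—the number of distinct $x$ with $f(x)\neq0$ and $f'(x)=\cdots=f^{(\acute{m}-1)}(x)=0$—with the number of distinct roots of $h=g/\gcd(f,g)$, where $g=\gcd(f',\dots,f^{(\acute{m}-1)})$. Then Corollary~\ref{cor:Practical counting via squarefree part} converts ``number of distinct roots of $h$'' into the exact degree $\deg h_{\mathrm{sf}}$ of the squarefree part $h_{\mathrm{sf}}=h/\gcd(h,h')$. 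Chaining these gives the master identity $\#\mathcal{Z}=\deg h_{\mathrm{sf}}$, from which every claim will follow.

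With this identity in hand, condition (1) is immediate: if $\deg h_{\mathrm{sf}}\ge K$ then $\#\mathcal{Z}=\deg h_{\mathrm{sf}}\ge K$. For condition (2) I would establish the degree bookkeeping identity $\deg h=\deg h_{\mathrm{sf}}+\deg\gcd(h,h')$, which holds because $\gcd(h,h')$ divides $h$ exactly and degrees are additive under multiplication in $\mathbb{C}[x]$. Rearranging yields $\deg h_{\mathrm{sf}}=\deg h-\deg\gcd(h,h')$, so the requirement $\deg\gcd(h,h')\le\deg h-K$ is precisely $\deg h_{\mathrm{sf}}\ge K$; the accompanying hypothesis $\deg h\ge K$ is then automatic (since $\deg\gcd(h,h')\ge0$) and is recorded only for readability. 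This shows that (2) is in fact equivalent to (1), not merely sufficient.

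Condition (3) is simply the algorithmic transcription of (1): it spells out the computation $g\to d=\gcd(f,g)\to h=g/d\to h_{\mathrm{sf}}=h/\gcd(h,h')$ and then tests the same inequality $\deg h_{\mathrm{sf}}\ge K$. The only points that require care—and the nearest thing to an obstacle—are verifying that each division returns a genuine polynomial so that the degrees are well defined: $\gcd(f,g)\mid g$ guarantees $h\in\mathbb{C}[x]$, while $\gcd(h,h')\mid h$ guarantees both $h_{\mathrm{sf}}\in\mathbb{C}[x]$ and its squarefreeness over the algebraically closed field $\mathbb{C}$ (so that $\deg h_{\mathrm{sf}}$ equals the count of its distinct roots). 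Once these divisibilities are noted, the conclusion $\#\mathcal{Z}=\deg h_{\mathrm{sf}}\ge K$ follows directly, completing all three cases.
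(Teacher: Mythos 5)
Your proposal is correct and follows essentially the same route as the paper's own proof: both chain Theorem~\ref{thm:S Counting distinct roots} with Corollary~\ref{cor:Practical counting via squarefree part} to identify \(\#\mathcal{Z}\) with \(\deg h_{\mathrm{sf}}\), use the degree identity \(\deg h_{\mathrm{sf}} = \deg h - \deg\gcd(h,h')\) for item (2), and treat item (3) as the explicit computational transcription. Your added observations—that the hypothesis \(\deg h \ge K\) in (2) is redundant and that the divisibilities \(\gcd(f,g)\mid g\) and \(\gcd(h,h')\mid h\) keep every quotient a genuine polynomial—are minor refinements of, not departures from, the paper's argument.
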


\begin{proof}

We prove the three items in order, reducing each to Corollary~\ref{cor:Practical counting via squarefree part}.

\medskip\noindent\textbf{(1)} By construction (and the discussion preceding the theorem) the polynomial \(h\) has exactly the same roots (counted with multiplicity) as the set of points \(z\) for which
\[
f'(x)=f''(x)=\cdots=f^{(\acute{m}-1)}(x)=0
\]
but without the roots where additionally \(f(x)=0\) (those have been removed when forming \(h\); see item (3) for the explicit construction). Therefore the number of \emph{distinct} points \(z\) with \(f(x)\neq0\) and \(f',\dots,f^{(\acute{m}-1)}\) vanishing is exactly the number of distinct roots of \(h\). By Corollary~\ref{cor:Practical counting via squarefree part} this number equals \(\deg h_{\mathrm{sf}}\). Hence if \(\deg h_{\mathrm{sf}}\ge K\) then \(\#\mathcal Z\ge K\), proving item 1.

\medskip\noindent\textbf{(2)} The equality
\[
h_{\mathrm{sf}}=\frac{h}{\gcd(h,h')}
\]
implies
\[
\deg h_{\mathrm{sf}}=\deg h-\deg\big(\gcd(h,h')\big).
\]
Thus \(\deg h_{\mathrm{sf}}\ge K\) is equivalent to
\[
\deg h-\deg\big(\gcd(h,h')\big)\ge K,
\]
i.e.
\[
\deg\big(\gcd(h,h')\big)\le \deg h-K.
\]
Interpreting \(\gcd(h,h')\) as the polynomial collecting all repeated prime factors of \(h\), this condition means that after removing repeated factors from \(h\) at least \(K\) distinct linear factors remain. This proves the equivalence asserted in item 2.

\medskip\noindent\textbf{(3)} We now justify the computational recipe. Let
\[
g:=\gcd\big(f',f'',\dots,f^{(\acute{m}-1)}\big).
\]
By definition every root of \(g\) is a point where all derivatives \(f',\dots,f^{(\acute{m}-1)}\) vanish; conversely every point where those derivatives vanish is a root of \(g\). Some of those roots might also satisfy \(f(x)=0\); to exclude those we form
\[
d:=\gcd(f,g),
\]
so that \(d\) captures exactly the common factors corresponding to points where \(f=0\) and the derivatives vanish simultaneously. Then
\[
h:=\frac{g}{d}
\]
is a polynomial whose roots are precisely the points where the derivatives vanish but \(f\neq0\), counted with their multiplicities inherited from \(g\). Finally compute the squarefree part
\[
h_{\mathrm{sf}}=\frac{h}{\gcd(h,h')},
\]
which has the same distinct roots as \(h\) but with multiplicity \(1\). By Corollary~\ref{cor:Practical counting via squarefree part} the number of distinct such points equals \(\deg h_{\mathrm{sf}}\). Hence \(\deg h_{\mathrm{sf}}\ge K\) implies \(\#\mathcal Z\ge K\), which is exactly the computational criterion stated in item 3. This completes the proof of Theorem~\ref{thm:Sufficient and checkable conditions for at least K distinct points}.
\end{proof}

\begin{remark}[Interpretation and useful checks]
\begin{enumerate}
  \item The polynomial \(g\) detects locations where the first \(\acute{m}-1\) derivatives simultaneously vanish. Dividing by \(d=\gcd(f,g)\) removes those locations where \(f\) also vanishes (we only want points with \(f\neq0\)).
  \item The squarefree test is important: a high degree \(h\) could nonetheless have few \emph{distinct} roots if it contains high multiplicity factors. Passing to the squarefree part gives the exact distinct-root count.
  \item Algorithmically the steps are standard polynomial operations (Euclidean gcd, differentiation, polynomial division) and are efficiently implemented in computer algebra systems.
  \item If one needs lower bounds without explicit computation of \(h\), one can derive combinatorial sufficient conditions from the structure of \(f\) (for example, if \(f'\) factors with several known coprime linear factors that are not zeros of \(f\), each such factor contributes at least one point to \(\mathcal{Z}\)).
\end{enumerate}
\end{remark}

\paragraph{Counting the number of Jordan Blocks}

We begin with the following settings.
\begin{itemize}
  \item Let \(n_d\) denote the total number of available distinct eigenvalues which can be obtained from Theorem~\ref{thm:Sufficient and checkable conditions for at least K distinct points}.
  \item We select \(K\) distinct eigenvalues from these (\(1 \leq K \leq n_d\)).
  \item The total matrix size is \(m\).
  \item For the chosen \(K\) eigenvalues, let
  \[
  \alpha_{1}^{(\mathrm{A})}, \alpha_{2}^{(\mathrm{A})}, \dots, \alpha_{K}^{(\mathrm{A})},
  \qquad 
  \sum_{k=1}^K \alpha_{k}^{(\mathrm{A})} = m,
  \]
  where \(\alpha_{k}^{(\mathrm{A})}\) is the algebraic multiplicity of eigenvalue \(\lambda_k\).
  \item Let \(p(n)\) denote the partition number of \(n\), i.e. the number of ways of decomposing \(n\) into a sum of positive integers, ignoring order.
\end{itemize}

\subparagraph{Jordan Structures for a Single Eigenvalue.}
For a fixed eigenvalue \(\lambda_k\) with algebraic multiplicity \(\alpha_{k}^{(\mathrm{A})}\), the possible Jordan structures correspond bijectively to the partitions of \(\alpha_{k}^{(\mathrm{A})}\).  
Hence the number of distinct Jordan block structures for \(\lambda_k\) is
\[
p\big(\alpha_{k}^{(\mathrm{A})}\big).
\]

\subparagraph{Fixed Multiplicity Vector.}
Given a multiplicity vector \(\big(\alpha_{1}^{(\mathrm{A})},\dots,\alpha_{K}^{(\mathrm{A})}\big)\), the number of distinct Jordan structures (up to block permutation) is
\[
\prod_{k=1}^K p\!\left(\alpha_{k}^{(\mathrm{A})}\right).
\]

\subparagraph{Summing over Multiplicity Vectors.}
For a fixed choice of \(K\) eigenvalues, the multiplicities vary over all positive integer vectors with sum \(m\).  
Therefore, the number of Jordan structures for this fixed eigenvalue set is
\[
\sum_{\substack{\alpha_{1}^{(\mathrm{A})}+\cdots+\alpha_{K}^{(\mathrm{A})}=m \\ \alpha_{k}^{(\mathrm{A})}\geq 1}}
\;\prod_{k=1}^K p\!\left(\alpha_{k}^{(\mathrm{A})}\right).
\]

\subparagraph{Choosing the Eigenvalues.}
The number of ways to choose the \(K\) distinct eigenvalues from the \(n_d\) available ones is
\(\binom{n_d}{K}\).

\subparagraph{Final Count.}
Hence the total number of distinct Jordan structures (treating block permutations as identical) is
\begin{equation}\label{eq:Jordan-structure-count}
\boxed{\;
\mathcal{N}(n_d,K,m)
=
\binom{n_d}{K}\,
\sum_{\substack{\alpha_{1}^{(\mathrm{A})}+\cdots+\alpha_{K}^{(\mathrm{A})}=m \\ \alpha_{k}^{(\mathrm{A})}\geq 1}}
\;\prod_{k=1}^K p\!\left(\alpha_{k}^{(\mathrm{A})}\right).
\;}
\end{equation}

\subsubsection{Dense Coefficients}

Although the approaches for the existence of $K$ distinct roots and the counting of differernt Jordan blocks for valid matrix $\bm{X}$ are same for sparse coeffeicients, we will discuss some computation variation for gcd in terms of the sparse $f$ and the dense $f$. 

In polynomial gcd computation, the structure of a polynomial—whether sparse or dense—significantly affects the choice of algorithm and computational efficiency. Let
\[
\gcd(f(x), g(x)) \quad \text{or more generally} \quad \gcd(f, f', f'', \dots, f^{(\acute{m}-1)})
\]
be the target gcd computation, where $g(x)=\gcd(f', f'', \dots, f^{(\acute{m}-1)})$.

A \textbf{sparse polynomial} is characterized by having relatively few non-zero terms compared to its degree. For example, \(f(x) = x^{1000} + x^3 + 1\) is highly sparse. Computation and storage focus on the non-zero terms rather than the full degree range. Algorithms for sparse polynomials typically include exponent-based gcd methods, which compute the gcd by analyzing the positions and exponents of non-zero terms, and randomized evaluation methods such as Zippel's algorithm, which reduce computation by evaluating the polynomial at random points~\cite{zippel1981newton}. FFT-based methods are generally inefficient for sparse polynomials because they introduce many unnecessary operations on zero coefficients. The complexity of sparse gcd algorithms is primarily determined by the number of non-zero terms, denoted \(t_f, t_g\), rather than the highest degree, making them especially efficient for high-degree polynomials with few terms.

In contrast, a \textbf{dense polynomial} has most of its coefficients non-zero. An example is \(f(x) = x^{100} + 2x^{99} + \dots + 1\). The evaluation of gcd of dense polynomials can be determined by classical Euclidean algorithms, FFT-based multiplication/division methods, and modular techniques that reduce coefficients modulo large primes or polynomials. These approaches utilize the continuity and regularity of coefficients. The computational complexity for dense polynomials depends on the polynomial degree  $\deg(f)$, rather than the number of terms, and is generally higher than sparse methods for extremely high-degree polynomials with few non-zero coefficients.

The main differences between sparse and dense gcd computation strategies can be summarized in Table~\ref{tab:gcd_sparse_dense}:

\begin{table}[h!]
\centering
\footnotesize
\setlength{\tabcolsep}{3pt}
\renewcommand{\arraystretch}{1.2}
\begin{tabularx}{\columnwidth}{|c|X|c|X|}
\hline
\textbf{Polynomial Type} & \textbf{Core Strategies} & \textbf{Complexity Measure} & \textbf{Notes} \\
\hline
Sparse & Exponent-based, randomized (e.g., Zippel) & Number of non-zero terms \(t_f, t_g\) & Efficient for high-degree, few-term polynomials. \\
\hline
Dense & Euclidean algorithm, FFT, modular techniques & Polynomial degree \(\deg(f)\) & Efficient for polynomials with most coefficients non-zero. \\
\hline
\end{tabularx}
\caption{Comparison of gcd computation strategies for sparse vs. dense polynomials.}
\label{tab:gcd_sparse_dense}
\end{table}

In summary, sparse polynomial algorithms leverage the small number of non-zero terms to reduce computation, while dense polynomial algorithms exploit coefficient continuity using classical or FFT-based methods. The choice of algorithm depends critically on the polynomial's structure to ensure computational efficiency.

\bibliographystyle{IEEEtran}
\bibliography{ZeroMatrixCompare_CountRealRoots_Bib}

\end{document}